\documentclass[11pt,a4paper]{article}
\usepackage{amsmath, amssymb, amsthm}
\usepackage{geometry}
\usepackage{xcolor}
\usepackage{hyperref}
\newtheorem{theorem}{Theorem}[section]
\newtheorem{lemma}[theorem]{Lemma}
\newtheorem{example}[theorem]{Example}

\DeclareMathOperator{\lcm}{lcm}
\DeclareMathOperator{\osc}{osc}
\newcommand{\e}{\exp}

\title{A Rational-Level  Criterion on Box Dimension of the Graph of Generalized Riemann-Type Functions}
\author{\sl Yurong Wu$^{a}$ and Guoping Zhan$^{b*}$}

\date{{\small \em $^a$School of Mathematical Sciences, Zhejiang University of Technology,\\
Hangzhou, 310023, P. R. China \\
e-mail: wuyurong2022@zjut.edu.cn; wuyurong2003@163.com\\
\ $^b$\thanks{Corresponding author: Guoping Zhan (e-mail: zhangp@zjut.edu.cn)}
School of Mathematical Sciences, Zhejiang University of Technology,\\
Hangzhou, 310023, P. R. China
\\e-mail: zhangp@zjut.edu.cn}}

\begin{document}
\maketitle

\begin{abstract}
We consider the box dimension of the graphs of the generalized Riemann-type functions
$G_\delta(x)=\sum_{n=1}^{\infty}g(n^{2}x)n^{-1-\delta}$ with 1-periodic real-valued continuous functions $g$
and $0<\delta\le 1$. Firstly, we establish  a rational-level non-vanishing criterion for the lower bound of lower box dimension of
the graph of $G_\delta$.
More precisely, We prove that the lower bound $\dim_B(\mathrm{graph}\,G_\delta)\ge\frac74-\frac\delta2$ under a mild decay condition of
the Fourier coefficients of $g$ and non-vanishing of the square-class chirp functional
$S_{d}(a;q)$ at a single rational $a/q$.
 A resolution theorem then asserts that for any nonconstant
real trigonometric polynomial $g$, the chirp functional $S(a;q)$ cannot vanish at every rational simultaneously;
consequently, $\dim_B(\mathrm{graph}\,G_\delta)=\frac74-\frac\delta2$ for all such $g$ with $0<\delta\le1$
which gives a negative answer to \cite[problem 2]{Wu-Zhan2026}.
Finally, two guiding examples distinguish structural vanishing
from genuinely arithmetic vanishing related to  modular elliptic curve and governed by the Prime Number Theorem.
\end{abstract}

\medskip
\noindent\textbf{Keywords}: {Riemann's non-differentiable function, box dimension, Gauss sums, Chinese remainder theorem, prime number theorem}

\tableofcontents

\section{Introduction}

\subsection{Background}
The celebrated Weierstrass function
\[W(x)=\sum_{n=0}^{\infty}a^n\cos(b^n\pi x),\]
with odd integer $b$, $a\in(0,1)$ and $ab>1+3\pi/2$ is continuous but nowhere differentiable function,
which was proved by Weierstrass in 1872. More generally, Hardy \cite{Hardy1916} later showed that
for $a\in(0,1)$ and $b>1$ with $ab\ge1$, $W$ is nowhere differentiable.

\par The fractal geometry of the graphs of Weierstrass-type functions has since attracted considerable attention.
For instance, for functions of the form $W(x)=\sum_{i=0}^{\infty}\lambda^{-\alpha i}g(\lambda^ix)$,
where $\lambda>1$, $0<\alpha<1$ and $g$ is an almost periodic Lipschitz function of order greater
than $\alpha$, Hu and Lau \cite{Hu-Lau1993} introduced a novel fractal measure $\mathcal{K}_s$ ($1\le s\le2$) and
showed that the $\mathcal{K}$-dimension of the graph equals to $2-\alpha$.  Hunt \cite{Hunt1998} proved that for
Weierstrass-type functions $w_\Theta(x)=\sum_{n=0}^\infty a^n\cos(2\pi(b^n x+\theta_n))$ with random phases, where $\theta_n\in\Theta=\{\theta_0,\theta_1,...\}$ are independent and uniformly distributed on $[0,1]$, the Hausdorff
dimension of the graph is almost surely $D=2+\frac{\log a}{\log b}$. Heurteaux \cite{Heurteaux2003} established
the corresponding result for the box dimension of its graph. Bara\'nski, B\'ar\'any and Romanowska \cite{Baranski-Barany-Romanowska2014}
employed smooth ergodic theory (Pesin theory) combined with transversality to study the classical Weierstrass function
$W_{\lambda,b}(x)=\sum_{n=0}^{\infty} \lambda^n \cos(2\pi b^n x)$ with integer $b\ge2$ and $1/b<\lambda<1$. They proved that
for each $b$, there exists $\lambda_b\in(1/b,1)$ such that for every $\lambda\in(\lambda_b,1)$, the Hausdorff dimension of
its grap is $D=2+\log\lambda/\log b$.

\par Recent years have witnessed major breakthroughs in this area. Shen \cite{Shen2018} resolved a longstanding conjecture
by proving that for every integer $b\ge2$ and every $\lambda\in(1/b,1)$, the graph of the classical Weierstrass function
$W_{\lambda,b}(x)=\sum_{n=0}^{\infty}\lambda^n\cos(2\pi b^nx)$ has Hausdorff dimension $D=2+\frac{\log\lambda}{\log b}$.
Subsequently, Ren and Shen \cite{Ren-Shen2021} established a dichotomy theorem for Weierstrass-type functions $W(x)=\sum_{n=0}^{\infty}\lambda^n\phi(b^nx)$, which states that for a real-analytic periodic seed function $\phi$,
integer $b\ge2$, and $1/b<\lambda<1$, either $W$ is real-analytic, or its graph has Hausdorff dimension $D=2+\log_b\lambda$.
This generalizes the result of classical Weierstrass function. In a further development, Ren and Shen \cite{Ren-Shen2024} considered the high-dimensional analogue $W(x)=\sum_{n=0}^{\infty}\lambda^n\phi(b^nx)$, where $\phi:\mathbb{R}\to\mathbb{R}^d$ is real-analytic and periodic,
$b\ge2$ is an integer, and $\lambda\in(1/b,1)$. They proved that both the box dimension and the Hausdorff dimension of its graph are given by
$\min\left\{\log_{\lambda^{-1}}b,\;1+(d-q)\left(1+\log_b\lambda\right)\right\}$,
where $q=q(\phi,b,\lambda)$ is the maximal dimension of all linear subspaces $V<\mathbb{R}^d$ for which the projection
$\pi_V W$ is Lipschitz. This fully extends Shen's breakthrough \cite{Shen2018} to higher dimensions.

Additionally, Buczolich, K\"aenm\"aki and Maga \cite{Buczolich-Kaenmaki-Maga2025} investigated the level sets of the
$\alpha$-Weierstrass function $W_{g}^{\alpha,b}(x)=\sum_{k=0}^{\infty}b^{-\alpha k}g(b^kx)$, where $g$ is a Lipschitz function
on the circle. They showed that for a prevalent
$\alpha$-Weierstrass function, the upper box dimension of every level set is at most $1-\alpha$, while the Hausdorff dimension
of almost every level set (with respect to its occupation measure) is exactly $1-\alpha$.

\par In parallel, Riemann introduced in the 19th century the renowned  function \[R(x)=\sum_{n=1}^{\infty}n^{-2}\sin(\pi n^2x)\]
and conjectured that it is everywhere non-differentiable, though he did not supply a rigorous proof. It was not until 1916 that
Hardy \cite{Hardy1916}  proved that $R$ fails to be differentiable at every irrational point and at certain rational points.
Later, Gerver \cite{Gerver1970} showed that $R$ is in fact differentiable at rational points of the form $(2p+1)/(2q+1)$
with $p, q\in\mathbb{Z}$.

\par After that,  in 1991 Duistermaat \cite{Duistermaat1991} conducted a systematic and deep study of the local structure
of the Riemann function near rational points, revealing a rich self-similar structure in its graph. And then,
Jaffard \cite{Jaffard1996} determined the H\"older regularity of $R$ at every point and computed its full singularity spectrum,
thereby establishing that the Riemann function is inherently multifractal. Also, Chamizo and Ubis \cite{Chamizo-Ubis2014}
studied polynomial Fourier series $F(x)=\sum_{n=1}^{\infty}n^{-\alpha}e^{2\pi iP(n)x}$, where $P$ is a polynomial of degree $k>1$.
For $1+k/2<\alpha<k$, they obtained upper and lower bounds for the singularity spectrum of $F$, concluding that $F$ is also multifractal.
Meanwhile, Chamizo et al. \cite{Chamizo-Petrykiewicz-Ruiz2017} studied the local regularity of fractional integrals of Fourier series, with particular emphasis on series arising from fractional integrals of modular forms. Their results indicate that, in general,
cusp forms give rise to pure fractals rather than multifractals.

Regarding the fractal dimension of Riemann-type functions, Chamizo and C\'{o}rdoba \cite{Chamizo-Cordoba1999} proved that for
$F_{\alpha,k}=\sum_{n=1}^\infty c_ne^{2\pi in^kx}/n^{\alpha}$, where $0<\underline{\lim}\,c_n\le\overline{\lim}\,c_n<\infty$
and $(k+1)/2\le\alpha\le k+1/2$, the box dimension of the graph satisfies
$\dim_{\mathrm{B}}\bigl(graph(F_{\alpha,k})\bigr)=2+1/2k-\alpha/k$. They further provided a complete characterisation
of the differentiability $f_k(x)=\sum_{n\geq1}n^{-k}e^{2\pi in^kx}$ at rational points. In the special case $c_n=1$.
Chamizo and Ubis  studied Chamizo and Ubis \cite{Chamizo-Ubis2007} studied the box dimension of the graph of
$f_{\alpha,k}(x)=\sum_{n=1}^\infty n^{-\alpha}e^{2\pi in^kx}$ and proved that
$\dim_{\mathrm{B}}\bigl(graph(f_{\alpha,k})\bigr)=\max(1,\,2+1/2k-\alpha/k)$ for $\alpha>(k+2)/4$,
thereby extending the earlier box-dimension formula for $F_{\alpha,k}$ of the constant-coefficient case.
Moreover, Chamizo \cite{Chamizo2004} considered Fourier series defined via a fractional integral of automorphic
forms and investigated their differentiability properties, as well as the fractal dimension of the graphs of their
real and imaginary parts. Later, C\'ordoba \cite{Cordoba2008} established that for
$F_{\delta}(x)=\sum_{n=1}^{\infty}\sin(2\pi n^{2}x)/n^{1+\delta}$ with $0<\delta\le1$, which corresponds $g=sin$,
the box dimension of the graph is $\dim_{\mathrm{B}}\bigl(graph(F_{\delta})\bigr)=\frac{7}{4}-\frac{\delta}{2}$.
The proof relies on quadratic residues for the lower bound and on Weyl-type incomplete Gauss sums with Farey
dissection for the upper bound; the cosine case is analogous.

\par The geometric properties of complex analogues of the Riemann function have also attracted considerable interest.
Eceizabarrena \cite{Eceizabarrena2020,Eceizabarrena2021} studied the complex Riemann function
$\phi(t)=\sum_{k\in\mathbb{Z}}(e^{-4\pi^2ik^2t}-1)/(-4\pi^2k^2)$, which arises numerically as the
limiting trajectory of a corner of a regular polygonal vortex filament under the binormal flow as the number
of sides tends to infinity.  In \cite{Eceizabarrena2020}, the author proved that $\phi(\mathbb{R})$ admits no
tangent anywhere in the sense of Falconer's geometric tangent definition. In \cite{Eceizabarrena2021}, he showed that the image
$\phi(\mathbb{R})$ has Hausdorff dimension satisfying $1\le\dim_{\mathcal{H}}\phi(\mathbb{R})\le4/3$, and that for the set
$D_{\alpha}$ of points with H\"older exponent $\alpha\in[1/2,\,3/4]$, one has $\dim_{\mathcal{H}}\phi(D_{\alpha})\le(4\alpha-2)/\alpha$.

\par Furthermore, Banica and Vega \cite{Banica-Vega2022} proved the existence of solutions to the binormal flow
with smooth trajectories that can be made arbitrarily close to curves exhibiting multifractal behaviour,
thus establishing a direct link between this classical analytic object and a nonlinear geometric PDE.
Banica et al.\cite{Banica-Eceizabarrena-Nahmod-Vega2025} combined restriction-type Diophantine approximation
from analytic number theory with multifractal analysis from harmonic analysis to give a complete description of
the singularity spectrum of the generalised Riemann function $R_{x_0}(t)=\sum_{n\neq0}e^{2\pi i(n^2t+nx_0)}/n^2$
for rational parameters $x_0$. This function shares the same singularity spectrum as the classical Riemann function
described in \cite{Jaffard1996}. Moreover, they proved that $R_{x_0}$ also exhibits multifractal behaviour when
$x_0$ is irrational.

\par In our recent work \cite{Wu-Zhan2026}, we investigated the box dimension of the graphs of generalized
Riemann-type functions $G_\delta(x)=\sum_{n=1}^{\infty}g(n^{2}x)n^{-1-\delta}$, where $g$ is 1-periodic continuous
and $0<\delta\le 1$, extending the classical case of $g(x)=\sin(\pi x)$ with $\delta=1$.
Under an arithmetic non-vanishing condition on the Fourier coefficients of $g$ and a mild decay assumption,
we introduced a new mechanism to prove that the lower box dimension is at least $7/4-\delta/2$, and if
$g'$ is Lipschitz, we showed that the upper box dimension is at most  $7/4-\delta/2$,
demonstrating that the upper bound depends only on the regularity of $g$. We also
gave some examples to illustrate our non-vanishing condition's deep connection with the
Prime Number Theorem and $L$-functions, and ended with several problems for further research.

\par Motivated by \cite{Wu-Zhan2026}, in this paper we study the box dimension of the graph of
the generalized Riemann-type function $G_\delta(x)=\sum_{n=1}^{\infty}g(n^{2}x)n^{-1-\delta}$
for every nonconstant real trigonometric polynomial $g$ with $0<\delta\le1$, and prove that it
equals $7/4-\delta/2$. This gives a negative answer to \cite[problem 2]{Wu-Zhan2026},
which asks that whether there exists a trigonometric polynomial $g$ satisfying the vanishing condition
and $\dim_{\mathrm{B}}(graph(G_{\delta}))<\frac74 -\frac{\delta}{2}$ ? The main ingredient is a rational-level non-vanishing criterion
for the lower bound of lower box dimension of the graphs of $G_\delta$. More precisely, we first
introduce the square-class chirp functional $S_{d}(a/q)$ for every square-free positive integer $d$,
which is defined as a Gauss-mean average of the Fourier coefficients of $g$,
and prove that the lower bound $\dim_B(\mathrm{graph}\,G_\delta)\ge\frac74-\frac\delta2$ under a mild decay condition of
the Fourier coefficients of $g$ and non-vanishing of
$S_{d}(a/q)$ at a single rational $a/q$. Secondly, we prove a \emph{resolution theorem}, which
states that for $g$ a nonconstant real trigonometric polynomial, the chirp functional cannot vanish at every
rational simultaneously. Consequently, this implies that $\dim_B(\mathrm{graph}\,G_\delta)=\frac74-\frac\delta2$
for every nonconstant real trigonometric polynomial $g$ with $0<\delta\le1$.
Finally, we present two guiding examples distinguish structural vanishing, detected already at level $q_0=4$,
one of which is genuinely arithmetic and related to Prime Number Theorem and the automorphic series considered in Chamizo \cite{Chamizo2004}.

\par As in \cite{Wu-Zhan2026}, throughout this article, $g$ is real and $1$-periodic with Fourier expansion
$$g(x)=\sum_{k\in\mathbb Z}C_ke(kx)\quad {\rm with}\quad e(t):=e^{2\pi i t}, \quad C_{-k}=\overline{C_k}
\quad {\rm and} \quad C_0=0.$$
Given $0<\delta\leq1$, write $c_k:=C_kk^{\delta/2}$ for each $k\ge1$.
For integers $t\in\mathbb Z$ and $q\ge1$, let
\begin{equation}\label{eq:Gauss-mean}
G(t;q):=\frac1q\sum_{n=1}^{q}e\Big(\frac{tn^2}{q}\Big)
\end{equation}
denote the quadratic Gauss mean, and for $a,q$ integers, $q\ge 1$, define the \emph{chirp functional}
\begin{equation}\label{eq:chirp}
S(a;q):=\sum_{k\ge1}c_kG(ka;q).
\end{equation}
In particular, for each $a\in\mathbb{Z}$ we have $S(a;1)=\sum_k c_k$,
which corresponds to the classical level-one sum. More generally, for a square-free integer $d\ge 1$ we define the
\emph{square-class chirp functional at level $q$} by
\begin{equation}\label{eq:chirp-squareclass}
S_d(a;q):=\sum_{m\ge 1}c_{m^2d}\,G(m^2da;q).
\end{equation}
At level $q=1$ this recovers the classical square-class sum
$$S_d:=S_d(a;1)=\sum_{m\ge 1}c_{m^2d},$$
which can be viewed as base series  defining the vanishing condition in \cite{Wu-Zhan2026}.

\par The strategy of this paper is to generalize the single-level non-vanishing condition defined in
\cite{Wu-Zhan2026} to the square-class chirp functional $S_{d_0}(a_0;q_0)$ defined at an arbitrary
rational point $a_0/q_0$. In the proof of Resolution Theorem \ref{thm:resolution},
the chirp functional $S(a;q)$ defined in \eqref{eq:chirp} is applied to
a general finite sequence of coefficients and used to locate a modulus $q_0$
at which $S_{d_0}(a_0;q_0)\neq 0$, playing only an auxiliary role.
To prove Theorem \ref{thm:criterion}, we establish two new technical ingredients.
The first one is \emph{twisted local estimate} (Lemma \ref{lem:twisted}),
in which the oscillatory integral is perturbed by a $q_0$-period phase $e(ka_0n^2/q_0)$.
The second one is \emph{composite denominator counting lemma} (Lemma \ref{lem:composite_denom}), in which the
Farey points $a/p$ are replaced by  points $A/(q_0p)$ of composite denominator. Combing this with Theorem \ref{thm:criterion}
yields an unconditional dimension formula (Theorem \ref{thm:dichotomy}) for trigonometric
polynomial as seed functions, giving negative answer to an open question proposed in \cite{Wu-Zhan2026}.

\subsection{Main results}
Now we present our main results of the paper as follows.

\begin{theorem}[Resolution theorem]\label{thm:resolution}
Let $\{r_k\}_{k=1}^N$ be a finite complex sequence not identically zero. Then there
exist an integer $q_0\ge1$, depending only on $N$, and an integer $a_0$ with
$\gcd(a_0,q_0)=1$ such that $$S(a_0;q_0)=\sum_{k=1}^N r_k G(ka_0;q_0)\neq0.$$
\end{theorem}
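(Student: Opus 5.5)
The plan is to argue by contradiction, turning vanishing at all reduced fractions of one well-chosen modulus into vanishing of all finite Fourier coefficients of the map $a\mapsto S(a;Q)$, and then to show that these Fourier coefficients determine $\{r_k\}$. Throughout write $R(x)=\sum_{k=1}^N r_k e(kx)$, so that $S(a;q)=\frac1q\sum_{n\bmod q}R(an^2/q)$.

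\textbf{Step 1 (reduction from coprime $a$ to all $a$).} We use the identity $G(te;qe)=G(t;q)$ for $e\mid q$, which holds since $n\mapsto n^2$ on $\mathbb Z/qe$ covers $\mathbb Z/q$ uniformly. Hence for $\gcd(a,Q)=e$ we get $S(a;Q)=S(a/e;Q/e)$ with $\gcd(a/e,Q/e)=1$. Fix a modulus $Q=Q(N)$, chosen in Step 3. Suppose $S(a_0;q_0)=0$ for every divisor $q_0\mid Q$ and every $a_0$ coprime to $q_0$. Then $S(a;Q)=0$ for every residue $a \bmod Q$. Since every $q_0\mid Q$ is bounded in terms of $N$, contradicting this assumption produces the required $q_0$ depending only on $N$.

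\textbf{Step 2 (Fourier inversion).} For $h\in\mathbb Z/Q$, orthogonality gives
\[
\sum_{a \bmod Q} S(a;Q)\,e\Big(-\frac{ah}{Q}\Big)=\sum_{k=1}^N r_k\,\nu_k(h),\qquad \nu_k(h):=\#\{n \bmod Q:\ kn^2\equiv h \ (\mathrm{mod}\ Q)\}.
\]
So the assumption forces $\sum_k r_k\nu_k(h)=0$ for all $h$. It remains to show that the $N$ vectors $(\nu_k(h))_{h \bmod Q}$ are linearly independent for a suitable $Q$.

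\textbf{Step 3 (choice of $Q$ and independence).} Take $Q=\prod_{p\in\mathcal P} p^{e_p}$, where $p^{e_p}>N^2$ and $\mathcal P$ is a finite set of primes depending only on $N$ (always including $2$ and all $p\le N$). By CRT, $\nu_k(h)=\prod_p \nu_k^{(p)}(h)$ with local counts mod $p^{e_p}$. We run a triangular argument.

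First, order $k$ by the valuation vector $(v_p(k))_{p\le N}$. A test $h$ with $v_p(h)=v_p(k^\ast)$ for all small $p$ gives $\nu_k(h)=0$ unless $v_p(k)\le v_p(h)$ with the parity of $v_p(h)-v_p(k)$ even. Since $e_p$ is large, no wrap-around occurs. Taking $k^\ast$ maximal in this partial order among indices with $r_k\neq0$ kills all but the $k$ with the same valuation vector.

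Among those $k$, the prime-to-$N!$ parts $u_k$ are distinct integers $\le N$, and with the common small-prime valuations they are distinct. The local counts at primes $p\nmid k$ are $1+\big(\frac{kh}{p}\big)$ for odd $p$, and a fixed function of $kh \bmod 8$ for $p=2$. Expanding the product over the primes in $\mathcal P$ and varying $h$ over units modulo the large primes, we extract the relations
\[
\sum_k r_k\,\chi_S(k)=0\quad\text{for every product } \chi_S \text{ of Legendre symbols (and 2-adic characters)}.
\]
These characters span all functions on a set of integers provided $k\mapsto(\chi_p(k))_{p\in\mathcal P}$ is injective on it. For distinct $k,k'$ of the same valuation profile, $kk'$ is not a perfect square up to the already matched factors. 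So there is a prime $p$ with $\big(\frac{kk'}{p}\big)=-1$, and by Dirichlet's theorem such a $p$ can be found below an explicit bound $P(N)$; we include these primes in $\mathcal P$. This gives $r_{k^\ast}=0$, a contradiction.

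\textbf{Main obstacle.} The delicate point is the bookkeeping in Step 3. Integers whose ratio is a square, such as $k$ and $4k$, are invisible to Legendre symbols. They must instead be separated by the $p$-adic valuation filtration: the test points $h$ divisible by prime powers, together with the choice $p^{e_p}>N^2$. The triangular induction must be arranged so that the character argument is applied only within a single valuation class. I would first carry this out in the model cases $Q=p^{e}$ and $Q=8\cdot p^e$, and then glue via CRT.
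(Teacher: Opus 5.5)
Your Steps 1 and 2 are correct and match the paper: Fourier inversion in $a$ gives $\sum_k r_k N_q(k,l)$, and a nonzero $S(a;q)$ is then reduced to a fraction in lowest terms via $G(ak;q)=G(a_0k;q_0)$. The gap is in Step 3, where the extremal choice is backwards. For a test point $h$ with $v_p(h)=v_p(k^\ast)$, you correctly observe that $\nu_k(h)\neq0$ forces $v_p(k)\le v_p(k^\ast)$ (with even difference) for every $p$. So the indices that survive are those lying \emph{below} $k^\ast$ in the valuation order, and choosing $k^\ast$ \emph{maximal} does nothing to exclude them from the support. Your own example shows the failure. For $r_1=1$, $r_4=-1$ you get $k^\ast=4$, and with $h=4$ both $\nu_1(4)$ and $\nu_4(4)$ are nonzero. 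As you note yourself, Legendre symbols cannot distinguish $1$ from $4$ afterwards. As written, the argument therefore never isolates a single coefficient.

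The repair is to take $k^\ast$ \emph{minimal}, for instance the least $k$ with $r_k\neq0$, and to set $h=k^\ast$. Any surviving $k$ in the support then satisfies $k\mid k^\ast$, hence $k=k^\ast$. So $\sum_k r_k\nu_k(k^\ast)=r_{k^\ast}\nu_{k^\ast}(k^\ast)\neq0$, since $n=1$ is a solution.

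Note also that every $k\le N$ is determined by $(v_p(k))_{p\le N}$, so each valuation class contains exactly one index. Your ``prime-to-$N!$ parts'' are all equal to $1$. The whole Legendre-symbol/Dirichlet step is therefore vacuous and can be dropped, along with the parity condition and the primes beyond $N$.

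What remains is exactly the paper's argument. Take $k_0$ the least index in the support and $l=k_0$. Each larger $k$ in the support does not divide $k_0$, so for some prime $p$ the power $q_k=p^{v_p(k_0)+1}$ divides $k$ but not $k_0$, and $kx^2\equiv k_0\pmod{q_k}$ is unsolvable. Set $q=\lcm\{q_k\}$, or uniformly $q=\lcm(1,\dots,N)$ since each $q_k\mid k\le N$; this gives dependence on $N$ alone. Then $F_q(k_0)=r_{k_0}N_q(k_0,k_0)\neq0$.
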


\begin{theorem}[Rational-level non-vanishing criterion] \label{thm:criterion}
Let $g(x)$ be a {\rm 1}-periodic continuous real-valued function with Fourier coefficients $C_{k}$ and
$G_{\delta}(x)= \sum_{n=1}^{\infty}\frac{g(n^{2}x)}{n^{1+\delta}}$ for $0<\delta\le1$. If $\sum_{k=1}^{\infty}|C_{k}|k^{\frac{\epsilon+\delta}{2}}<\infty$ for some $\epsilon > 0$, and
there exist positive integers $a_{0}$ and $q_{0}$ with $\gcd(a_{0},q_{0})=1$ and a positive
square-free integer $d_0$ such that \begin{align}\label{non-vanishing condition}
S_{d_0}(a_{0},q_{0})=\sum\limits_{\substack{k=m^{2}d_0\\ m\in\mathbb{N}^+}}C_k\,k^{\delta/2}G(ka_{0};q_{0}) \ne 0
\end{align}
then $\dim_{\rm B}(\text{graph}(G_{\delta}))\ge\frac{7}{4}-\frac{\delta}{2}$.
\end{theorem}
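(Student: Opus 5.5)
The plan is to follow the oscillation-counting strategy of \cite{Wu-Zhan2026} (and C\'ordoba \cite{Cordoba2008}), replacing the Farey points $a/p$ by points of composite denominator $x_0=A/(q_0p)$. The lower box dimension bound will follow from the standard estimate
\[
\sum_{I\in\mathcal I_\eta}\osc_I G_\delta\gtrsim \eta^{-1}\cdot \eta^{\,2-(7/4-\delta/2)}=\eta^{-3/4+\delta/2}
\]
for the mesh $\mathcal I_\eta$ of intervals of length $\eta$. We will produce roughly $\eta^{-1/2}$ disjoint intervals, each of length $\asymp\eta$, on which the oscillation is at least $c\,\eta^{1/4+\delta/2}$. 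Summing then gives $\eta^{-1/2}\cdot\eta^{1/4+\delta/2}=\eta^{-1/4+\delta/2}$ times a normalization. Concretely, we take $p$ prime of size $\asymp \eta^{-1/2}$ and windows $x=x_0+h$ with $|h|\asymp \eta$.

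First, we expand $G_\delta(x_0+h)=\sum_k C_k\sum_n n^{-1-\delta}e(kn^2x_0)e(kn^2h)$ plus its conjugate. We choose $A$ with $A\equiv a_0p\pmod{q_0}$ (up to CRT adjustments), so that $kn^2A/(q_0p)$ splits as $k a_0' n^2/q_0+k b n^2/p$. Splitting $n$ into residue classes mod $q_0p$ and applying Poisson summation, that is, the twisted local estimate Lemma \ref{lem:twisted}, the $n$-sum near $h$ should equal
\[
G(ka_0;q_0)\,G(kb;p)\,\Gamma\text{-factor}\cdot (kh)^{\delta/2}+O\bigl((q_0p)^{\dots}\bigr),
\]
up to the unit factors coming from the CRT decomposition of the Gauss mean. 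For $p\nmid k$, the quadratic Gauss sum mod $p$ gives $G(kb;p)=\left(\frac{kb}{p}\right)\varepsilon_p p^{-1/2}$. The dependence on $k$ is then through the Legendre symbol $\left(\frac{k}{p}\right)$, which depends only on the square class $d$ of $k$ (i.e. $\left(\frac{d}{p}\right)$). The main term of the increment $G_\delta(x_0+h)-G_\delta(x_0)$ thus becomes
\[
p^{-1/2}h^{\delta/2}\sum_{d\ \text{square-free}}\Big(\frac{d}{p}\Big)S_d(a_0;q_0)\cdot(\text{bounded $h$-dependent weight})+\text{error}.
\]
Second, we isolate $d_0$. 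We take primes $p$ in suitable residue classes mod $4\prod_{\ell\mid d,\,d\le D}\ell$ so that $\left(\frac{d}{p}\right)$ is prescribed; alternatively, we average over $b$ and over $p$ with a twisted character combination to extract the $d_0$ term. The tail with large $d$ is controlled by the decay hypothesis $\sum|C_k|k^{(\epsilon+\delta)/2}<\infty$, which also bounds the $k$ with $p\mid k$ and the Poisson error terms. By Dirichlet/PNT in progressions, a positive proportion of primes $p\asymp\eta^{-1/2}$ qualifies. Combined with \eqref{non-vanishing condition}, this yields $|G_\delta(x_0+h)-G_\delta(x_0)|\ge c\,p^{-1/2}\eta^{\delta/2}$ for suitable $h\asymp\eta$.

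Third, we count. The composite denominator counting Lemma \ref{lem:composite_denom} guarantees that the points $A/(q_0p)$, with $p$ in the admissible set and $A$ in the admissible classes, number $\gg\eta^{-1}$ and are $\gg\eta$-separated. Summing the oscillations then gives $\eta^{-1}\cdot\eta^{1/4}\eta^{\delta/2}$. Dividing by $\eta$ gives $N_\eta\gtrsim \eta^{-2+1/4+\delta/2}=\eta^{-(7/4-\delta/2)}$.

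The main obstacle is the second step: making the main-term coefficient genuinely proportional to $S_{d_0}(a_0;q_0)$ uniformly in $p$, without cancellation from other square classes. I would first try restricting to primes $p$ with $\left(\frac{\ell}{p}\right)=1$ for all primes $\ell\le L$ except a sign pattern isolating $d_0$, then letting $L\to\infty$ slowly using the $\epsilon$-decay to kill the remaining classes.
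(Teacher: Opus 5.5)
\textbf{There is a genuine gap in your first and third steps.} Both rest on a \emph{pointwise} statement. At each single point $x_0=A/(q_0p)$ with $h\asymp p^{-2}$, you need an asymptotic for ($p^{\delta}$ times) the frequency-$k$ increment whose main term $G(kA;q_0p)M_k$, of order $p^{-1/2}k^{\delta/2}$, dominates the remainder. You then need $|G_\delta(x_0+h)-G_\delta(x_0)|\ge c\,p^{-1/2}\eta^{\delta/2}$ for every admissible $x_0$. Neither is available at this scale. The phase $e(kAn^2/(q_0p))$ has period $Q=q_0p$, so Poisson summation at scale $p$ produces the dual points $p(m-t/Q)=(Qm-t)/q_0$, i.e.\ $j/q_0$ with $j\in\mathbb Z$. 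Their spacing does not grow with $p$. Even with the Gauss-sum bound $|\widehat c(t)|=O(Q^{-1/2})$, the off-diagonal terms are only $O\bigl(q_0^{1/2}k^{(1+\delta)/2}p^{-1/2}\bigr)$, the same order in $p$ as the main term. Lemma~\ref{lem:twisted} itself, applied with period $q_0p$ in place of $q_0$, gives $O\bigl(q_0k^{(\alpha+\delta)/2}p^{1-\alpha}\bigr)$, which swamps $p^{-1/2}$. In other words, $h\asymp(q_0p)^{-2}$ is the edge of validity of the local expansion around a rational of denominator $q_0p$, and nothing prevents individual increments from being much smaller than $p^{-1/2}h^{\delta/2}$. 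Shrinking $h$ to $\theta(q_0p)^{-2}$ does not rescue this under the hypothesis $\sum_k|C_k|k^{(\epsilon+\delta)/2}<\infty$ alone. For $k\gtrsim\theta^{-1}$ you must use either the trivial bound $O\bigl(|C_k|(kh)^{\delta/2}\bigr)$, which lacks the factor $p^{-1/2}$, or the Poisson bound, which has that factor but grows like $k^{(1+\delta)/2}$ and so needs $\epsilon\ge1$. Interpolating between the two loses a power of $p$. (Also, the number of intervals is $\asymp\eta^{-1}/\log(1/\eta)$, as in your third step, not $\eta^{-1/2}$ as in your opening paragraph.)

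\textbf{The missing idea} is to abandon pointwise bounds and sum over the residues \emph{before} any asymptotics; this is exactly how the paper proceeds. Your passing remark about averaging over $b$ points this way, but the averaging must be done first, exactly, and for every frequency. With $A\equiv a_0b\pmod{q_0}$, $A\equiv a\pmod p$ and $p\equiv b\pmod{B}$, the Chinese remainder theorem gives $e(kn^2x_A)=e(ka_0n^2/q_0)\,e(kn^2a\overline{q}_0/p)$. For $p\nmid kn$ one has exactly $\sum_{a\in R(p)}e(kn^2a\overline{q}_0/p)=\frac12\bigl[\varepsilon_p\sqrt p\,\bigl(\frac kp\bigr)\bigl(\frac{q_0}p\bigr)-1\bigr]$, independent of $n$; terms with $p\mid n$ drop out since $1-e(kn^2/p^2)=0$. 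This extracts the $\sqrt p$ and the Legendre symbol exactly and leaves $T_k(p)$ with only the $q_0$-periodic twist. The dual points become $pj/q_0$, and Lemma~\ref{lem:twisted} gives a remainder $O\bigl(q_0k^{(\alpha+\delta)/2}p^{-\alpha}\bigr)$ against a main term $G(ka_0;q_0)M_k$ of size $k^{\delta/2}$. Since the trivial bound is now also free of $p^{-1/2}$, interpolation with $\alpha=\epsilon$ works. The resulting bound $\bigl|\sum_{a\in R(p)}\Delta(a)\bigr|\gtrsim p^{1/2-\delta}$ suffices, because $\sum_{a\in R(p)}\osc(G_\delta;D_{p,a})\ge\bigl|\sum_{a\in R(p)}\Delta(a)\bigr|$ is all that Lemma~\ref{lem:composite_denom} needs.

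Two smaller corrections. First, the frequencies with $p\mid k$ get no square-root saving. For small $\epsilon$ the decay hypothesis controls them only off a set of primes of relative density zero (Lemma~\ref{lem:density}), not for every $p$. Second, to isolate $d_0$ you need neither $L\to\infty$ nor one prescribed pattern; an all-$+1$ pattern isolates nothing. Instead:
\begin{itemize}
\item fix $L$ so that square classes with a prime factor $>L$ contribute a small fraction of $|S_{d_0}(a_0;q_0)|$;
\item note that averaging $\chi(d_0)\sum_{d\ L\text{-smooth}}\chi(d)S_d(a_0;q_0)$ over all sign patterns $\chi$ on the primes $\le L$ returns $S_{d_0}(a_0;q_0)$;
\item realize a good pattern by a class $b\bmod B$ via reciprocity and Dirichlet;
\item use $p\bmod 4$ to control the real or imaginary part produced by pairing $k$ with $-k$.
\end{itemize}
This is Lemma~\ref{freezing-lem}.
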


\begin{theorem}\label{thm:dichotomy}
Let $0<\delta\le1$ and let $g$ be a nonconstant real trigonometric polynomial. Then
$$\dim_{\rm B}(\mathrm{graph}\,G_\delta)=\frac74-\frac\delta2.$$
In particular, for any trigonometric polynomial $g$, no vanishing among the square-class sums $S_d$ can
lower the box dimension of the graph of corresponding $G_{\delta}$.
\end{theorem}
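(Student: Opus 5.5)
The plan is to obtain Theorem~\ref{thm:dichotomy} by pairing a lower bound from Theorem~\ref{thm:criterion} (fed by Theorem~\ref{thm:resolution}) with the upper bound already available from \cite{Wu-Zhan2026}.

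\textbf{Reductions.} Let $g$ be a nonconstant real trigonometric polynomial of degree $N$. A constant term $C_0$ only adds the constant $C_0\zeta(1+\delta)$ to $G_\delta$. This vertical translation does not change any box dimension, so we may assume $C_0=0$, as in the standing convention. Since $g$ is nonconstant and real, there is some $k_1\in\{1,\dots,N\}$ with $C_{k_1}\neq0$, hence $c_{k_1}=C_{k_1}k_1^{\delta/2}\neq0$. The decay hypothesis $\sum_k |C_k|k^{(\epsilon+\delta)/2}<\infty$ of Theorem~\ref{thm:criterion} holds trivially for any $\epsilon>0$, because the sum is finite.

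\textbf{Lower bound.} Write $k_1=m_1^2d_0$ with $d_0$ square-free. Apply Theorem~\ref{thm:resolution} to the finite sequence
\[
r_k:=c_k\,\mathbf 1_{\{k=m^2d_0,\ m\in\mathbb N^+\}},\qquad 1\le k\le N.
\]
This sequence is not identically zero, since $r_{k_1}=c_{k_1}\neq0$. Theorem~\ref{thm:resolution} then supplies $q_0\ge1$ and $a_0$ with $\gcd(a_0,q_0)=1$ such that
\[
\sum_{k=1}^N r_kG(ka_0;q_0)\neq0 .
\]
By construction, the left-hand side is exactly $S_{d_0}(a_0;q_0)$ from \eqref{eq:chirp-squareclass}, because $c_k=0$ for $k>N$. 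The function $G(t;q_0)$ depends only on $t \bmod q_0$, so we may replace $a_0$ by its residue in $\{1,\dots,q_0\}$. This keeps both the coprimality and the non-vanishing, and makes $a_0$ positive as Theorem~\ref{thm:criterion} requires. Condition \eqref{non-vanishing condition} therefore holds, and Theorem~\ref{thm:criterion} gives $\underline{\dim}_{\rm B}(\mathrm{graph}\,G_\delta)\ge\frac74-\frac\delta2$.

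\textbf{Upper bound.} A trigonometric polynomial is $C^\infty$, so $g'$ is Lipschitz. The upper-bound theorem of \cite{Wu-Zhan2026} then yields $\overline{\dim}_{\rm B}(\mathrm{graph}\,G_\delta)\le\frac74-\frac\delta2$. That result does not use any non-vanishing hypothesis. Combining this with the lower bound shows that the box dimension exists and equals $\frac74-\frac\delta2$.

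\textbf{The ``in particular'' clause.} The argument above never required any level-one sum $S_d=S_d(a;1)$ to be nonzero. Non-vanishing is always recovered at some rational level $a_0/q_0$, so vanishing of the $S_d$ cannot lower the dimension.

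\textbf{Main difficulty.} The substantive difficulty lies entirely in the two earlier theorems, which are assumed here. Within this proof, the only point needing care is the restriction of the coefficients to a single square class. It works because Theorem~\ref{thm:resolution} applies to arbitrary finite sequences, not just to the full sequence $\{c_k\}$.
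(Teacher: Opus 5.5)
Your proposal is correct and follows essentially the same route as the paper: the resolution theorem feeds the rational-level criterion for the lower bound, and the upper bound comes from the earlier Wu--Zhan theorem for Lipschitz $g'$. The only difference is the order of two trivial steps. You fix a square class $d_0$ containing a nonzero coefficient first and apply the resolution theorem to the restricted sequence, which is legitimate since that theorem holds for any finite nonzero sequence. The paper instead applies it to the full sequence $C_kk^{\delta/2}$ and then picks, from the finitely many square classes, some $d_0$ with $S_{d_0}(a_0;q_0)\neq0$.
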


\subsection{Structure of this paper}
{This paper is organized as follows.
Section \ref{sec:prelim} collects some notation and preliminaries for readers' convenience.
Section \ref{sec:proofs} proves our main results. We first prove the resolution theorem
(Theorem \ref{thm:resolution}) via an explicit and elementary construction of a suitable modulus $q_0$,
and then show the twisted local estimate (Lemma \ref{lem:twisted}) controlling the oscillatory sum $T_k(p)$
by Poisson summation, and a density lemma (Lemma \ref{lem:density}) isolating primes with non-vanishing
twisted chirp functional.
Combined with the composite denominator counting lemma (Lemma~\ref{lem:composite_denom}) and a tail estimate
from the Prime Number Theorem,
these yield the rational-level non-vanishing criterion (Theorem~\ref{thm:criterion}) and the unconditional
dimension formula for trigonometric polynomials (Theorem~\ref{thm:dichotomy}).
\par Section \ref{sec:examples} presents two examples distinguishing structural vanishing (detected already at level $q_0=4$)
from arithmetic vanishing (arising from Hasse-Weil $L$-functions and governed by Prime number theorem).

\section{Preliminaries and Conventions}\label{sec:prelim}
\par In the present paper, we adopt the  preliminary facts from our earlier work \cite{Wu-Zhan2026} , where some  standard number-theoretic tools such as Chinese remainder theorem, quadratic reciprocity, Gauss sums are collected for the convenience of the reader.

Throughout, constants denoted $C_\delta,A_\delta,\dots$ depend only on $\delta$ (and, where
indicated, on $g$, $\varepsilon$, or $q_0$) and may change from line to line.
For two positive quantities $A$ and $B$, $A\lesssim B$ means that there exists an absolute constant $C>0$
such that $A\le CB$, and $|A|\lesssim|B|$ is denoted by $A=O(B)$. Similarly, $A\lesssim_{\varepsilon}B$ denotes there exists
a constant $C_{\varepsilon}>0$ depending only on the parameter $\varepsilon$ such that $A\le C_{\varepsilon}B$. Also, $A\asymp B$
 means $C_1\le A\le C_2B$ with two universal constants $C_1,\, C_2>0$. We write
$e(t)=e^{2\pi it}$.

\section{Proof of the main results}\label{sec:proofs}
For integers $t$ and $q\ge1$, let $G(t;q)$ be defined as in \eqref{eq:Gauss-mean}.
We now begin to restate and prove Theorem \ref{thm:resolution} as follows.
\begin{theorem}{\rm(Theorem \ref{thm:resolution})}
Let $\{r_k\}_{k=1}^N$ be a finite complex sequence not identically zero. Then there
exist an integer $q_0\ge1$, depending only on $N$, and an integer $a_0$ with
$\gcd(a_0,q_0)=1$ such that $$S(a_0;q_0)=\sum_{k=1}^N r_k G(ka_0;q_0)\neq0.$$
\end{theorem}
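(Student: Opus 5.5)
The plan is to argue by contradiction via linear algebra. Suppose $S(a;q)=\sum_{k\le N}r_kG(ka;q)=0$ for every $q$ in a suitable finite family $\mathcal Q_N$ (depending only on $N$) and every $a$ coprime to $q$. I will show that the vectors $\big(G(ka;q)\big)_{k=1}^N$, for $q\in\mathcal Q_N$ and $\gcd(a,q)=1$, span $\mathbb C^N$. That forces $r\equiv0$, a contradiction, and $q_0$ can then be taken from $\mathcal Q_N$.

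\emph{Step 1 (factorisation).} By the Chinese remainder theorem, $G(\cdot\,;\cdot)$ is twisted-multiplicative in the modulus: if $q=q_1q_2$ with $\gcd(q_1,q_2)=1$, then
\[
G(t;q_1q_2)=G(tq_2;q_1)\,G(tq_1;q_2).
\]
This follows by writing $n=n_1q_2+n_2q_1$. Hence, for $q=\prod_\ell \ell^{e_\ell}$, the function $k\mapsto G(ka;q)$ is a product of local factors. I take the moduli of the form $q=Q\cdot P$, where:
\begin{itemize}
\item $Q=\prod_{\ell\le N}\ell^{e_\ell}$, with $\ell$ prime and $0\le e_\ell\le E_N$ for some $E_N$ depending only on $N$;
\item $P=p_1\cdots p_r$ is a product of distinct primes $p_i>N$.
\end{itemize}

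\emph{Step 2 (local evaluation).} For a prime $p>N$ and $p\nmid t$, the classical Gauss sum gives $G(t;p)=\varepsilon_p p^{-1/2}\big(\tfrac tp\big)$. So the $P$-part contributes a nonzero constant times the Jacobi symbol $\big(\tfrac{kaQ}{P}\big)$.

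For the small primes, write $k=\ell^{j}u$ with $\ell\nmid u$, where $j\le\log_2N$. Then $G(ka;\ell^e)$ is computed explicitly:
\begin{itemize}
\item it equals $1$ if $j\ge e$;
\item otherwise it equals $\ell^{(j-e)/2}$ times a factor depending only on the square class of $ua$ modulo $\ell$ (modulo $8$ when $\ell=2$), and this factor is genuinely nonzero only for the appropriate parity of $e-j$.
\end{itemize}

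Fix the sign pattern first: $a$ varies over residues coprime to $q$, and $Q$ is fixed. Then, as $P$ ranges over products of large primes, the functions $k\mapsto\big(\tfrac{k}{P}\big)$ realise all quadratic characters of the squarefree kernels of $k\le N$. This holds because, by Dirichlet's theorem (or CRT plus a bounded least-prime-in-progression), for each prescribed pattern of Legendre symbols on the primes $\le N$ there is a prime $p>N$ realising it, with size bounded in terms of $N$ only. Character orthogonality over these patterns then lets me isolate each class of $k$ with a fixed square-free kernel $d$ and fixed $\ell$-adic valuations.

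\emph{Step 3 (separating valuations).} Within a fixed square class, the integers $k\le N$ are distinguished by the vector of valuations $(j_\ell)_{\ell\le N}$. Letting $e_\ell$ run over $0,1,\dots,E_N$ gives, for each $\ell$, a matrix $\big(G(\ell^{j}u;\ell^{e})\big)_{j,e}$. Because of the dichotomy "$=1$ if $j\ge e$, otherwise a power $\ell^{(j-e)/2}$ times a unit-type factor", this matrix is triangular with nonzero diagonal, after using both parities of $e$ and both square classes of the unit $a$. It is therefore invertible. Tensoring over $\ell$ (legitimate by Step 1) and combining with Step 2 shows that the delta functions $\mathbf 1_{k=k_0}$, $k_0\le N$, lie in the span of $k\mapsto G(ka;q)$ with $q\in\mathcal Q_N$. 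Hence some $S(a_0;q_0)\neq0$.

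The main obstacle is Step 3, i.e.\ the local triangularity at $\ell=2$. The values $G(2^ju;2^e)$ vanish for certain parities of $e-j$ and depend on $u\bmod 8$. I will first handle odd $\ell$ cleanly, and then treat $\ell=2$ by an explicit table of $G(2^ju;2^e)$ for $e\le E_N$, checking the needed nondegeneracy directly, possibly enlarging $E_N$ by $2$ to skip the vanishing parities.
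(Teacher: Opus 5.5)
Your route is viable, but it differs from the paper's and is much heavier. Two points in Steps 2--3 also need correcting; see the second paragraph.

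\textbf{Comparison.} The paper never evaluates a Gauss sum. It takes the discrete Fourier transform of $a\mapsto S(a;q)$ over $\mathbb{Z}/q\mathbb{Z}$, which turns each Gauss mean into a solution count: $\sum_{a=1}^{q}S(a;q)e(-al/q)=\sum_k r_kN_q(k,l)$, where $N_q(k,l)$ is the number of solutions of $kx^2\equiv l \pmod q$. It then takes $l=k_0$, the smallest index in the support, and $q$ the $\lcm$ of the prime powers $p^{v_p(k_0)+1}$ dividing the larger indices $k$ in the support. Every term with $k>k_0$ is killed, because that prime power divides $kx^2$ but not $k_0$. What remains is $r_{k_0}N_q(k_0,k_0)\neq0$, so some $S(a;q)\neq0$, and reducing $a/q$ to lowest terms via $G(ka;q)=G(ka_0;q_0)$ finishes. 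This is also a triangularity argument, but in the divisibility of the indices rather than the valuations of the modulus. Since $q\mid\lcm(1,\dots,N)$, it gives the uniformity in $N$ for free. Your argument uses CRT factorisation, explicit local Gauss sums, auxiliary primes $p>N$ with Dirichlet's theorem to separate square classes by quadratic characters, and then local invertibility at each $\ell\le N$. It does prove the equivalent spanning statement for $(G(ka;q))_{k\le N}$ over an explicit finite family of moduli, and it exposes a local--global structure. However, the large primes and Dirichlet's theorem are unnecessary.

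\textbf{Points to correct.} First, for odd $\ell$ and $\ell\nmid w$ you have $G(w;\ell^f)=\ell^{-f/2}$ for $f$ even and $\big(\tfrac{w}{\ell}\big)\varepsilon_\ell\ell^{-f/2}$ for $f$ odd. So these local factors never vanish for either parity of $e-j$; the only vanishing local value is $G(w;2)=0$. Hence the matrix $\big(G(\ell^ju;\ell^e)\big)_{j,e}$ is not triangular as it stands: it equals $1$ on the whole region $j\ge e$ and is nonzero elsewhere. Triangularity appears only after differencing. With $f_e(j)=G(\ell^ju;\ell^e)$, the vector $f_e-f_{e-1}$ vanishes for $j\ge e$ and equals $G(u;\ell)-1\neq0$ at $j=e-1$. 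This holds for $\ell=2$ as well, where that value is $-1$. Alternatively, differences over the class of $a$ work, with pivot at $j=e-2$ when $\ell=2$; that is where your extra room in $E_N$ gets used.

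Second, the decoupling must be made explicit.
\begin{itemize}
\item $G(kaP;Q)$ depends on $P \bmod Q$, and each local factor $G(kaQ/\ell^{e_\ell};\ell^{e_\ell})$ depends on the square class of $Q/\ell^{e_\ell}$ modulo $\ell$. You therefore have to choose $a$ by CRT so that $aP$ and each $aQ/\ell^{e_\ell}$ land in prescribed classes.
\item For $k=dm^2$ in a fixed square class, the unit part of $k$ has a fixed square class modulo $\ell$ (modulo $8$ if $\ell=2$). This is exactly what makes the local factor a function of $v_\ell(k)$ alone, and it is what legitimises your tensor-product step.
\end{itemize}
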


\begin{proof}
\textbf{Step 1.} Assume $q \ge 2$. We compute the discrete Fourier transform of $S(a,q)$ over $\mathbb{Z}/q\mathbb{Z}$.
For $l\in\mathbb{Z}$, define
\begin{align}
F_{q}(l) &:= \sum_{a=1}^{q}S(a,q)e\left(-\frac{al}{q}\right) \notag \\
&= \sum_{k=1}^{N}c_{k}\left[\sum_{n=1}^{q}\frac{1}{q}\sum_{a=1}^{q}e\left(\frac{a(kn^{2}-l)}{q}\right)\right]. \label{eq:Fq_l} 
\end{align}
Note that the inner sum evaluates to
$$
\frac{1}{q}\sum_{a=1}^{q}e\left(\frac{a(kn^{2}-l)}{q}\right) =
\begin{cases}
1, & \text{if } kn^{2} \equiv l \pmod{q}, \\
0, & \text{if } kn^{2} \not\equiv l \pmod{q}.
\end{cases}
$$
Thus, for fixed integers $k,\,l$ and $q$, the expression inside the brackets in \eqref{eq:Fq_l} equals the number of solutions to the congruence $kx^{2} \equiv l \pmod{q}$ with $x \in \{1,2,\dots,q\}$. Denoting this number of solutions by $N_{q}(k,l)$, equation \eqref{eq:Fq_l} simplifies to 
\begin{equation} \label{eq:Fq_simplified}
F_{q}(l) = \sum_{k=1}^{N}c_{k}N_{q}(k,l).
\end{equation} 

\textbf{Step 2.} Since the coefficients $c_{k}$ are not all zero, let $k_{0}=\min\{1\le k\le N:\ C_k\neq0\}$.
We now set $l=k_{0}$. In the following we will construct a modulus $q$ such that the congruence
$kx^{2} \equiv k_{0} \pmod{q}$ has solutions only when $k=k_{0}$, and no solution for any $k>k_{0}$ with $c_{k}\ne0$.

\par For each fixed $k>k_{0}$, there exists a prime $p_{k}$ such that $v_{p_{k}}(k) > v_{p_{k}}(k_{0})$, where $v_{p}(m)$ denotes the exponent of $p$ in the prime factorization of $m$. Set $e_{k} = v_{p_{k}}(k_{0})+1$ and $q_{k} = p_{k}^{e_{k}}$.
Then the congruence $kx^{2} \equiv k_{0} \pmod{q_{k}}$ has no solution. Indeed, for any integer $x$, since $q_{k} \mid k$, it follows that $q_{k} \mid kx^{2}$; however, $q_{k} \nmid k_{0}$, which implies $q_{k} \nmid (kx^{2}-k_{0})$.

\par Now, choose
$$ q=\lcm\{q_{k}:\ k_0<k\le N\ \text{and}\ c_{k}\ne 0\}.$$
Since $kx^{2} \equiv k_{0} \pmod{q_{k}}$ has no solution for any $k>k_{0}$, the congruence $kx^{2} \equiv k_{0} \pmod{q}$ likewise has no solution. That is, $N_{q}(k,k_{0})=0$ for all $k>k_{0}$. 
On the other hand, for $k=k_{0}$, the congruence $k_{0}x^{2} \equiv k_{0} \pmod{q}$ clearly admits at least one solution $x=1$, so $N_{q}(k_{0},k_{0}) \ge 1$.

\par Substituting these into \eqref{eq:Fq_simplified} yields
$$ F_{q}(k_{0}) = \sum_{k=1}^{N}c_{k}N_{q}(k,k_{0}) = c_{k_{0}}N_{q}(k_{0},k_{0}) \ne 0. $$

By the initial definition \eqref{eq:Fq_l}, we also have
\begin{align*}
F_{q}(k_{0}) &= \sum_{a=1}^{q}S(a,q)e\left(-\frac{ak_{0}}{q}\right) \\
&= \sum_{a=1}^{q-1}S(a,q)e\left(-\frac{ak_{0}}{q}\right) + S(q,q) \\
&= \sum_{a=1}^{q-1}S(a,q)e\left(-\frac{ak_{0}}{q}\right) + \sum_{k=1}^{N}c_{k} \ne 0.
\end{align*} 

\par We consider following two cases.
\begin{itemize}
    \item \textbf{Case 1.} If $\sum_{k=1}^{N}c_{k} \ne 0$, we simply take $a_{0}=q_{0}=1$, and the result holds.
    \item \textbf{Case 2.} If $\sum_{k=1}^{N}c_{k} = 0$, then
    \begin{equation} \label{eq:case2}
    \sum_{a=1}^{q-1}S(a,q)e\left(-\frac{ak_{0}}{q}\right)\ne0.
    \end{equation}
\end{itemize}
Hence, there must exist $a \in \{1,2,\dots,q-1\}$ such that $S(a,q) \ne 0$. Let $d = \gcd(a,q)$,
and write $a = da_{0}$ and $q = dq_{0}$. Then $\gcd(a_{0},q_{0})=1$. For each $1\le k\le N$, we have
\begin{eqnarray} G(ak;q)=\frac{1}{dq}\sum_{n=1}^{q}e\left(\frac{kan^{2}}{q}\right)
&=& \frac{1}{dq_0}\sum_{n=1}^{dq_0}e\left(\frac{ka_0n^{2}}{q_0}\right)\\ \nonumber
&=& \frac{1}{dq_0}\sum_{r=1}^{q_0}\sum_{t=0}^{d-1} e\left(\frac{ka_0(r+tq_0)^2}{q_0}\right)\\ \nonumber
&=&\frac{1}{dq_{0}}\,d\sum_{r=1}^{q_{0}}e\left(\frac{ka_{0}r^{2}}{q_{0}}\right)=G(a_{0}k;q_{0}).\end{eqnarray} 
Thus, $S(a_{0},q_{0}) = S(a,q) \ne 0$, which completes the proof.
\end{proof}

\begin{lemma}[Twisted Local Estimate] \label{lem:twisted}
Let $p$ be a prime, $k\in\mathbb{Z}\setminus\{0\}$, $q_{0}\in\mathbb{N}^+$ and $a_0\in\mathbb{Z}$
with $\gcd(a_{0},q_{0})=1$. Given $0<\delta\le1$, define
$$T_{k}(p):=\sum_{n=1}^{\infty}\frac{e(ka_{0}n^{2}/q_{0})(1-e(kn^{2}/p^{2}))}{n^{1+\delta}}, \quad
M_{k}=\int_{0}^{\infty}\frac{1-e(ku^{2})}{u^{1+\delta}}du.$$
Set $E_k(p)=p^{\delta}T_{k}(p)-G(ka_{0};q_{0})M_{k}$. Then there exists a constant $C_{\delta}>0$
depending only on $\delta$ such that for any $0<\alpha\le1$ we have
$$ |E_{k}(p)|\le C_{\delta}\,q_{0}\,|k|^{\frac{\alpha+\delta}{2}}p^{-\alpha}.$$
\end{lemma}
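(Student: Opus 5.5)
The plan is to split $n$ into residue classes modulo $q_0$ and compare each class sum with an integral. Set
$$f(x):=\frac{1-e(kx^2/p^2)}{x^{1+\delta}},\qquad x>0,$$
so that $|f(x)|\le \min\{2,\,2\pi|k|x^2p^{-2}\}\,x^{-1-\delta}$ and $f$ is integrable on $(0,\infty)$. Writing $n=r+q_0m$ with $1\le r\le q_0$ and $m\ge0$, the phase $e(ka_0n^2/q_0)=e(ka_0r^2/q_0)$ depends only on $r$, so
$$T_k(p)=\sum_{r=1}^{q_0}e\Big(\frac{ka_0r^2}{q_0}\Big)\,\Sigma_r,\qquad \Sigma_r:=\sum_{m\ge0}f(r+q_0m).$$
The substitution $x=pu$ gives $\int_0^\infty f=p^{-\delta}M_k$. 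Hence if we write $\Sigma_r=\frac1{q_0}p^{-\delta}M_k+\rho_r$, the main terms sum exactly to $p^{-\delta}G(ka_0;q_0)M_k$, and
$$E_k(p)=p^{\delta}\sum_{r=1}^{q_0}e\Big(\frac{ka_0r^2}{q_0}\Big)\rho_r .$$
So it suffices to prove, uniformly in $r$,
$$|\rho_r|\lesssim_\delta |k|^{(\alpha+\delta)/2}p^{-\alpha-\delta},$$
and the sum over $r$ then supplies the factor $q_0$. Since the bound for general $\alpha\in(0,1]$ follows by interpolation, i.e. $\min\{A,B\}\le A^{1-\alpha}B^{\alpha}$, it is enough to prove the endpoint cases $\alpha=0$ and $\alpha=1$.

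\emph{Case $\alpha=0$.} Set $X=p/\sqrt{|k|}$ and split the sums and integrals at $X$, using $|f|\lesssim |k|x^{1-\delta}p^{-2}$ below $X$ and $|f|\le 2x^{-1-\delta}$ above $X$. This gives
$$\sum_m|f(r+q_0m)|+\frac1{q_0}\int_0^\infty|f|\lesssim X^{-\delta}=|k|^{\delta/2}p^{-\delta}.$$
The same estimate settles the range $|k|\ge p^2$ for every $\alpha$: there $|k|^{\delta/2}\le|k|^{(\alpha+\delta)/2}p^{-\alpha}$. We may therefore assume $|k|<p^2$, so that $X>1$.

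\emph{Case $\alpha=1$.} The target is $|\rho_r|\lesssim |k|^{(1+\delta)/2}p^{-1-\delta}=X^{-1-\delta}$. Insert a smooth partition of unity $1=\chi(x/X)+(1-\chi(x/X))$, with $\chi$ supported in $[0,2]$ and equal to $1$ on $[0,1]$.
\begin{itemize}
\item On the part $\chi f$, compare the sum over the progression with the integral by Euler--Maclaurin. The error is at most $q_0$ times the total variation of $\chi f$ plus a boundary term. Since $|f'(x)|\lesssim |k|x^{-\delta}p^{-2}$ for $x\le 2X$, the variation is $\lesssim |k|X^{1-\delta}p^{-2}=X^{-1-\delta}$.
\item On the tail $(1-\chi)f$, the non-oscillating piece $(1-\chi)x^{-1-\delta}$ has variation $\lesssim X^{-1-\delta}$. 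That leaves the chirp piece
$$h(x):=(1-\chi(x/X))\,e(kx^2/p^2)\,x^{-1-\delta},$$
whose variation diverges for $\delta\le1$, so it must be treated separately.
\end{itemize}

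The chirp term $h$ is the main obstacle. I would apply Poisson summation along the progression:
$$\sum_m h(r+q_0m)=\frac1{q_0}\sum_{j\in\mathbb Z}\hat h(j/q_0)\,e(jr/q_0).$$
The $j=0$ term is exactly the integral being subtracted in $\rho_r$. For $j\ne0$, $\hat h(j/q_0)$ is an oscillatory integral with phase $kx^2/p^2-jx/q_0$. Its stationary point $x_j=jp^2/(2kq_0)$ matters only when $j$ has the sign of $k$ and $x_j\gtrsim X$. Its contribution is then of size
$$\Big(\frac{p}{\sqrt{|k|}}\Big)\,x_j^{-1-\delta}\asymp X\,\Big(\frac{|j|p^2}{|k|q_0}\Big)^{-1-\delta},$$
which is summable in $j$. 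Using $|k|<p^2$ and $q_0\ge1$, the sum over $j$ is bounded by $\lesssim q_0^{1+\delta}X^{-1-\delta}$; the resulting $q_0$-dependence can be absorbed by running the Poisson step in the variable $m$ rather than $n$.

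Away from the stationary points, integrating by parts twice gives decay $\hat h(j/q_0)\lesssim |j|^{-2}$ times $X^{-1-\delta}$, up to factors of $q_0$, which is again summable. The delicate bookkeeping is checking that all powers of $q_0$ can be held to a single factor $q_0$ in the final bound. If this fails, I would fall back on a van der Corput second-derivative estimate applied dyadically on the tail of $h$.
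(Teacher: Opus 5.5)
Your decomposition is a genuinely different and workable route, but as written it does not give the linear dependence on $q_0$ claimed in the statement. The remedy you suggest, running Poisson in $m$ instead of $n$, is the same identity and changes nothing. Powers of $q_0$ leak in two places.

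\textbf{Euler--Maclaurin step.} The contribution to $\rho_r$ is bounded by the variation itself, not $q_0$ times it. Indeed,
\[\sum_{m\ge0}F(r+q_0m)-\frac1{q_0}\int_r^\infty F=\sum_{m\ge0}\frac1{q_0}\int_{r+q_0m}^{r+q_0(m+1)}\bigl(F(r+q_0m)-F(x)\bigr)\,dx\]
is at most $\mathrm{Var}_{[r,\infty)}F$. The leftover $\frac1{q_0}\int_0^rF$ is $\lesssim X^{-1-\delta}$ once you separate $q_0\le X$ from $q_0>X$. With the extra $q_0$ you would end at $q_0^2$.

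\textbf{Chirp step.} This is the more serious leak. Your bound $\lesssim q_0^{1+\delta}X^{-1-\delta}$ for the $j$-sum, combined with the prefactor $1/q_0$ in the Poisson formula, only yields $|\rho_r|\lesssim q_0^{\delta}X^{-1-\delta}$. That is $q_0^{1+\delta}$ in $E_k(p)$ at $\alpha=1$.

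The missing idea is scaling. Put $x=Xy$ and use $kX^2/p^2=\pm1$:
\[\hat h(j/q_0)=X^{-\delta}\,\Psi(jX/q_0),\qquad \Psi(\eta):=\int_1^\infty\bigl(1-\chi(y)\bigr)y^{-1-\delta}e(\pm y^2-\eta y)\,dy.\]
Here $\Psi$ involves none of $k$, $p$, $q_0$. Clearly $|\Psi|\le1/\delta$. For $|\eta|\ge1$ one has $|\Psi(\eta)|\lesssim_\delta|\eta|^{-1-\delta}$: use van der Corput's second-derivative bound near $y=|\eta|/2$ and two integrations by parts elsewhere. There is no boundary term at $y=1$, since $1-\chi$ vanishes to infinite order there.

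It follows that $\sum_{j\ne0}|\Psi(jX/q_0)|\lesssim_\delta q_0/X$, since this is a Riemann sum of mesh $X/q_0$. Hence $\frac1{q_0}\sum_{j\ne0}|\hat h(j/q_0)|\lesssim X^{-1-\delta}$, uniformly in $r$ and $q_0$. This is exactly the paper's count $\sum_j|\Phi(pj/(q_0\sqrt k))|\lesssim q_0\sqrt k/p$.

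\textbf{A smaller slip at $\delta=1$.} Your bound $|f'|\lesssim|k|p^{-2}x^{-\delta}$ is not integrable at $0$ when $\delta=1$. Write $f(x)=kp^{-2}x^{1-\delta}\phi(kx^2/p^2)$ with $\phi(s)=(1-e(s))/s$. This gives $|f'(x)|\lesssim|k|p^{-2}\bigl((1-\delta)x^{-\delta}+|k|p^{-2}x^{2-\delta}\bigr)$ on $(0,2X]$, whose integral is $\lesssim X^{-1-\delta}$ uniformly in $\delta$.

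\textbf{Comparison with the paper.} With these repairs your argument proves the lemma. The paper instead expands the $q_0$-periodic phase into additive characters, with $\hat c(0)=G(ka_0;q_0)$ and $|\hat c(t)|\le1$. It applies Poisson once to the even function $h_k(n/p)$ on all of $\mathbb Z$, and imports $|\Phi(\eta)|\lesssim\min(1,\eta^{-\beta})$ with $\beta>1$ from the earlier paper. That single decay bound must absorb both the stationary point and the $|v|^{1-\delta}$ behaviour of the integrand at the origin.

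The two computations coincide after summing over residues, since $\frac1{q_0}\sum_r e\bigl((ka_0r^2+jr)/q_0\bigr)=\hat c(-j \bmod q_0)$. What your splitting buys is that the region near $0$ and the non-oscillatory tail are handled by bounded variation. Poisson is then needed only for a smooth chirp, which makes the argument self-contained.
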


\begin{proof}
Without loss of generality, we may assume $k>0$ since $E_{-k}(p)=\overline{E_k(p)}$. Define 
$$ h_{k}(u)=
\begin{cases}
\frac{1-e(ku^{2})}{|u|^{1+\delta}}, & \text{if } u \ne 0, \\
0, & \text{if } u = 0 \text{ and } 0 < \delta < 1, \\
-2\pi ik, & \text{if } u = 0 \text{ and } \delta = 1.
\end{cases}
$$ 
Since $h_k(u)$ is even, we have  $\widehat{h}_{k}(\xi) = 2k^{\frac{\delta}{2}}\Phi\left(\frac{\xi}{\sqrt{k}}\right)$, where
$$\Phi(\eta)=\int_{0}^{\infty}\frac{1-e(v^{2})}{v^{1+\delta}}\cos(2\pi\eta v)dv.$$
By \cite[Lemma 3.1]{Wu-Zhan2026}, there exist constants $A_\delta$ and $\beta > 1$ depending only on $\delta$ such that
$|\Phi(\eta)|\le A_{\delta}\eta^{-\beta}$ for $\eta\ge1$, and $|\Phi(\eta)|\le\widetilde{A}_{\delta}$ for $\eta > 0$.

\par Since $P(n):=e(ka_{0}n^{2}/q_{0})$ is an even function with period $q_{0}$, its finite Fourier expansion is given by 
$$ P(n) = \sum_{t=0}^{q_{0}-1}\hat{c}(t)e\left(\frac{tn}{q_{0}}\right)\quad {\rm with}\quad \widehat{c}\,(t)= \frac{1}{q_{0}}\sum_{\beta=0}^{q_{0}-1}P(\beta)e\left(-\frac{t\beta}{q_{0}}\right). $$ 
It follows that
$$\widehat{c}\,(0) = \frac{1}{q_{0}}\sum_{\beta=0}^{q_{0}-1}e\left(\frac{ka_{0}\beta^{2}}{q_{0}}\right) = G(ka_{0};q_{0}), $$ 
and $|\widehat{c}\,(t)|\le1$ for all $t \in \{0,1,\dots,q_{0}-1\}$. 

\par On the other hand, writing $p^{1+\delta}T_{k}(p) = \sum_{n=1}^{\infty}e(ka_{0}n^{2}/q_{0})h_{k}(n/p)$, we have 
\begin{align}
2p^{1+\delta}T_{k}(p) + h_{k}(0) &= \sum_{n \in \mathbb{Z}}P(n)h_{k}\left(\frac{n}{p}\right) \notag \\
&= \sum_{t=0}^{q_{0}-1}\widehat{c}\,(t)\sum_{n \in \mathbb{Z}}e\left(\frac{tn}{q_{0}}\right)h_{k}\left(\frac{n}{p}\right). \label{eq:Poisson_prep}
\end{align}
Applying the Poisson summation formula yields
$$ \sum_{n \in \mathbb{Z}}e\left(\frac{tn}{q_{0}}\right)h_{k}\left(\frac{n}{p}\right)=\sum_{m \in \mathbb{Z}}p\,\widehat{h}_{k}\left[p\,\left(m-\frac{t}{q_{0}}\right)\right]. $$ 
Substituting this back into \eqref{eq:Poisson_prep} gives
\begin{align}
p^{\delta}\,T_{k}(p) + \frac{h_{k}(0)}{2p} &= \frac{1}{2}\sum_{t=0}^{q_{0}-1}\widehat{c}\,(t)\sum_{m \in \mathbb{Z}}\widehat{h}_{k}\left[p\left(m-\frac{t}{q_{0}}\right)\right] \notag \\
&=\frac{1}{2}\widehat{c}\,(0)\widehat{h}_{k}(0) + \frac{1}{2}\sum_{\substack{(t,m)\ne(0,0)\\0\le t\le q_0-1,\,m\in \mathbb{Z}}}\widehat{c}\,(t)\,\widehat{h}_{k}\left[p\left(m-\frac{t}{q_{0}}\right)\right] \notag \\
&= G(ka_{0};q_{0})M_{k}+\frac{1}{2}\sum_{\substack{(t,m)\ne(0,0)\\0\le t\le q_0-1,\,m\in \mathbb{Z}}}\widehat{c}\,(t)\widehat{h}_{k}\left[p\left(m-\frac{t}{q_{0}}\right)\right]. \label{eq:error_sum}
\end{align}

\par Since $|q_0m-t|\ge1$ for each pair $(t,m)\ne(0,0)$ with $0\le t\le q_0-1$ and $m\in\mathbb{Z}$, we
can define $E_{j}=\{(t,m):\ |q_{0}m - t| = j\}$ for each $j\in\mathbb{N}^+$.
Each $j$ corresponds to at most two pairs $(t,m) \in E_{j}$. By $|\widehat{c}\,(t)| \le 1$ we obtain
\begin{align}
\left|\sum_{(t,m) \ne (0,0)}\widehat{c}\,(t)\widehat{h}_{k}\left[p\left(m-\frac{t}{q_{0}}\right)\right]\right|
&\le\sum_{\substack{(t,m)\ne(0,0)\\0\le t\le q_0-1,\,m\in \mathbb{Z}}}\left|\widehat{h}_{k}\left[p
\left(m-\frac{t}{q_{0}}\right)\right]\right| \notag \\
&= \sum_{j=1}^{\infty}\sum_{(t,m) \in E_{j}}\left|\widehat{h}_{k}\left[p\left(m-\frac{t}{q_{0}}\right)\right]\right| \notag \\
&\le 2\sum_{j=1}^{\infty}\left|\widehat{h}_{k}\left(\frac{pj}{q_{0}}\right)\right| \notag \\
&= 4k^{\frac{\delta}{2}}\sum_{j=1}^{\infty}\left|\Phi\left(\frac{pj}{q_{0}\sqrt{k}}\right)\right|. \label{eq:error_bound}
\end{align}

\par By the decay estimates of $\Phi(\eta)$, we split the sum as follows.
\begin{align*}
\sum_{j=1}^{\infty}\left|\Phi\left(\frac{pj}{q_{0}\sqrt{k}}\right)\right| &= \sum_{j \le \frac{q_{0}\sqrt{k}}{p}}\left|\Phi\left(\frac{pj}{q_{0}\sqrt{k}}\right)\right| + \sum_{j > \frac{q_{0}\sqrt{k}}{p}}\left|\Phi\left(\frac{pj}{q_{0}\sqrt{k}}\right)\right| \\
&\le \widetilde{A}_{\delta}\frac{q_{0}\sqrt{k}}{p} + A_{\delta}\sum_{j > \frac{q_{0}\sqrt{k}}{p}}\left(\frac{pj}{q_{0}\sqrt{k}}\right)^{-\beta} \\
&\le \left(\widetilde{A}_{\delta} + \frac{A_{\delta}}{\beta-1}\right)\frac{q_{0}\sqrt{k}}{p}.
\end{align*} 

\par Combining this with \eqref{eq:error_sum} and \eqref{eq:error_bound} yields
\begin{align}
|E_k(p)|=|p^{\delta}T_{k}(p) - G(ka_{0};q_{0})M_{k}| &\le 2\left(\widetilde{A}_{\delta} + \frac{A_{\delta}}{\beta-1}\right)q_{0}k^{\frac{1+\delta}{2}}p^{-1} + \frac{|h_{k}(0)|}{2p} \notag \\
&\le 2\left(\widetilde{A}_{\delta} + \frac{A_{\delta}}{\beta-1} + \pi\right)q_{0}k^{\frac{1+\delta}{2}}p^{-1}. \label{eq:T_k_bound1}
\end{align} 

\par On the other hand, direct calculation gives
\begin{align} \label{eq:T_k_direct}
|T_{k}(p)| &= \left|\sum_{n=1}^{\infty}\frac{e(ka_{0}n^{2}/q_{0})(1-e(kn^{2}/p^{2}))}{n^{1+\delta}}\right| \notag \\
&\le \sum_{n=1}^{\infty}\frac{|1-e(kn^{2}/p^{2})|}{n^{1+\delta}} \notag \\
&= \sum_{n \le p/\sqrt{k}}\frac{|1-e(kn^{2}/p^{2})|}{n^{1+\delta}} + \sum_{n > p/\sqrt{k}}\frac{|1-e(kn^{2}/p^{2})|}{n^{1+\delta}} \notag \\
&\le \sum_{n \le p/\sqrt{k}}\frac{2\pi kn^{2}/p^{2}}{n^{1+\delta}} + \sum_{n > p/\sqrt{k}}\frac{2}{n^{1+\delta}} \notag \\
&\le\frac{2\pi}{2-\delta}\,p^{-\delta}k^{\frac{\delta}{2}}+\frac{2}{\delta}\,p^{-\delta}k^{\frac{\delta}{2}}\notag \\
&:=\widetilde{C}_{\delta}p^{-\delta}k^{\frac{\delta}{2}}.
\end{align} 

\par This, together with $|M_{k}|=k^{\frac{\delta}{2}}|M_{1}|$ and \eqref{eq:T_k_direct}, implies
\begin{align}
|E_k(p)|=|p^{\delta}T_{k}(p)-G(ka_{0};q_{0})M_{k}| &\le p^{\delta}|T_{k}(p)| + |M_{k}| \notag \\
&\le (\widetilde{C}_{\delta}+|M_{1}|)k^{\frac{\delta}{2}}. \label{eq:T_k_bound2}
\end{align} 
\par Taking the weighted geometric mean of the bounds \eqref{eq:T_k_bound1} and \eqref{eq:T_k_bound2} with weights $\alpha$
and $1-\alpha$ respectively, we concluded that for any $0 < \delta \le 1$,
$$ |E_{k}(p)| = |p^{\delta}T_{k}(p) - G(ka_{0};q_{0})M_{k}| \le C_{\delta}q_{0}^{\alpha}k^{\frac{\alpha+\delta}{2}}p^{-\alpha}\le C_{\delta}q_{0}k^{\frac{\alpha+\delta}{2}}p^{-\alpha}. $$
\end{proof}

\begin{lemma} \label{freezing-lem} Given a sequence \(\{\lambda_k\}_{k\in\mathbb{Z}}\) satisfying $\lambda_0=0,\ \lambda_k=\overline{\lambda_{-k}}$ and $\sum_{k\in\mathbb{Z}}|\lambda_k|<\infty$.
Let \(D^{+}\) be the set of all square-free positive integers.
Soppose  $S_{d_0}(a_{0},q_{0}):=\sum_{m=1}^{\infty}\lambda_{m^2 d_0}G(a_{0}m^2 d_0;q_{0}) \ne 0$ for some \(d_0\in D^{+}\) and positive integers $a_0$, $q_0$ with $\gcd(a_{0},q_{0})=1$. Set
$$ \Theta_{p} = \sum_{k\in\mathbb{Z}}\lambda_{k}G(ka_{0};q_{0})\left(\frac{k}{p}\right). $$
Then there exist positive integers $b$ and $B$ satisfying $8q_{0} \mid B$, $\gcd(b,B)=1$ and a set of odd primes
$$ \mathcal{P}=\{\,p\ \text{odd prime}:\ p\equiv b\,({\rm mod}\ B)\}$$ 
such that $$|\Theta_{p}|\ge\eta$$
holds for all primes $p \in \mathcal{P}$, where $\eta > 0$ depends only on the sequence $\{\lambda_{k}\}_{k\in\mathbb{Z}}$.
\end{lemma}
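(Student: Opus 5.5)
The plan is to decompose $\Theta_p$ by square classes, so that the Legendre symbol $\left(\frac{k}{p}\right)$ becomes a finite family of characters of $p$ modulo a fixed modulus. An averaging argument over residue classes then shows that some class carries the value $S_{d_0}(a_0,q_0)$ with no cancellation.

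\textbf{Step 1 (square-class decomposition).} Write each $k\neq0$ uniquely as $k=s\,m^2d$ with $s=\pm1$, $m\ge1$ and $d\in D^+$. For an odd prime $p\nmid m$ we have $\left(\frac{k}{p}\right)=\left(\frac{sd}{p}\right)$, while the term vanishes if $p\mid m$. The identities $\lambda_{-k}=\overline{\lambda_k}$ and $G(-t;q_0)=\overline{G(t;q_0)}$ show that the negative-$k$ terms are the complex conjugates of the positive ones. Hence
$$\Theta_p=\sum_{d\in D^+}\Big[\Big(\tfrac{d}{p}\Big)S_d(a_0,q_0)+\Big(\tfrac{-d}{p}\Big)\overline{S_d(a_0,q_0)}\Big]-R_p,$$
where $S_d(a_0,q_0)=\sum_m\lambda_{m^2d}G(a_0m^2d;q_0)$. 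The correction $R_p$ collects the terms with $p\mid m$, so $|k|\ge p^2$, and $|G|\le1$ gives $|R_p|\le 2\sum_{|k|\ge p^2}|\lambda_k|\to0$ as $p\to\infty$. Since $\sum_{d}\sum_m|\lambda_{m^2d}|\le\sum_{k\ge1}|\lambda_k|<\infty$, we may fix $D\ge d_0$ such that $\sum_{d>D}|S_d|<|S_{d_0}|/8$. We also fix $P_0\ge D$ such that $2\sum_{|k|\ge P_0^2}|\lambda_k|<|S_{d_0}|/8$.

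\textbf{Step 2 (choice of modulus).} Let $B=8q_0\prod_{\ell\le P_0}\ell$, the product running over primes $\ell$. By quadratic reciprocity, for each $d\le D$ the symbols $\left(\frac{\pm d}{p}\right)$ equal $\chi_{\pm d}(p)$ for a real Dirichlet character $\chi_{\pm d}$ modulo $4d$, which divides $B$. So on every residue class $b$ modulo $B$ with $\gcd(b,B)=1$ these symbols are constant. Any prime $p\equiv b\pmod B$ with $\gcd(b,B)=1$ does not divide $B$, hence $p>P_0\ge D$. In particular $p\nmid d$ for $d\le D$, and the tail bound on $R_p$ applies.

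\textbf{Step 3 (averaging).} Define the truncated main term
$$F(b)=\sum_{d\le D}\big[\chi_d(b)S_d+\chi_{-d}(b)\overline{S_d}\big]$$
on $(\mathbb Z/B\mathbb Z)^\times$. Average $\chi_{d_0}(b)F(b)$ over all reduced classes. The product $\chi_{d_0}\chi_d$ is the character attached to $d_0d$, which is principal only when $d=d_0$, because both are square-free. The product $\chi_{d_0}\chi_{-d}$ is attached to $-dd_0$, which is never a square, so it is nonprincipal. Orthogonality then gives average exactly $S_{d_0}$. Hence there is a class $b$ with $|F(b)|\ge|S_{d_0}|$.

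For every prime $p\in\mathcal P=\{p\equiv b \pmod B\}$ we then obtain
$$|\Theta_p|\ge|S_{d_0}|-2\sum_{d>D}|S_d|-|R_p|\ge|S_{d_0}|/2=:\eta.$$
The set $\mathcal P$ is infinite by Dirichlet's theorem.

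\textbf{Main obstacle.} The delicate point is Step 3. One must check carefully that the characters $\chi_{\pm d}$, for distinct square-free $d$, really are distinct Dirichlet characters modulo $B$. This requires the reciprocity bookkeeping for even $d$ and the sign $-1$, which is handled by $8\mid B$. It also requires that the nontrivial ones remain nonprincipal after being lifted to modulus $B$. A second point is that $\eta$, as constructed, depends on $a_0,q_0,d_0$ through $S_{d_0}$. This is consistent with the statement, since these data are fixed together with $\{\lambda_k\}$.
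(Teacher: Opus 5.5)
Your proof is correct, but it is organized differently from the paper's. The paper's proof is one line. It sets $r_k=\lambda_kG(ka_0;q_0)$, observes $\sum_k|r_k|\le\sum_k|\lambda_k|<\infty$, and applies the proof of Lemma~3.3 of \cite{Wu-Zhan2026} to $\{r_k\}$; that is the level-one version, whose hypothesis is $\sum_m\lambda_{m^2d_0}\neq0$.

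That reduction silently uses two facts which your Step~1 makes explicit:
\begin{itemize}
\item $r_{-k}=\overline{r_k}$, because $G(-t;q_0)=\overline{G(t;q_0)}$;
\item $\sum_m r_{m^2d_0}=S_{d_0}(a_0,q_0)$, so $\Theta_p=\sum_k r_k\left(\frac{k}{p}\right)$ is literally the level-one quantity for the new sequence.
\end{itemize}
Your Steps~2--3 instead prove the content of the cited lemma from scratch, using orthogonality of the real characters $\chi_{\pm d}$ modulo $B$. The only outside inputs are standard: $p\mapsto\left(\frac{n}{p}\right)$ is nonprincipal whenever $n$ is not a perfect square, and lifting through the surjection $(\mathbb{Z}/B\mathbb{Z})^\times\to(\mathbb{Z}/4d\mathbb{Z})^\times$ preserves nonprincipality.

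The paper's route buys brevity. It also makes transparent that the rational level $a_0/q_0$ enters only through the reweighting $\lambda_k\mapsto\lambda_kG(ka_0;q_0)$.

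Your route buys a self-contained argument with $8q_0\mid B$ built in from the start. In the paper's reduction, if the cited lemma only yields $8\mid B$, one must enlarge $B$ to $q_0B$ and lift $b$ by the Chinese remainder theorem; the paper leaves this implicit. You also get an explicit constant $\eta=|S_{d_0}(a_0,q_0)|/2$. The paper instead quotes $\eta=\bigl(\frac{\sqrt2}{2}-\frac12\bigr)|S_{d_0}(a_0,q_0)|$ in Step~1 of the proof of Theorem~\ref{thm:criterion}.
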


\begin{proof} Let $r_k=\lambda_{k}G(ka_{0};q_{0})$, then $\sum_{k\in\mathbb{Z}}|r_k|\le\sum_{k\in\mathbb{Z}}|\lambda_k|<\infty$.
Applying the argument in the proof of \cite[Lemma 3.3]{Wu-Zhan2026} to $\{r_k\}_{k\in\mathbb{Z}}$ derives the conclusion.
\end{proof}

\begin{lemma}[Composite Denominator Counting Lemma] \label{lem:composite_denom}
 For prime $p$, denote $R(p)$ by the quadratic residue modulo $p$.  Let $a_0,\,q_0$ and $b$ be the same integers as in Lemma
{\rm \ref{freezing-lem}} with $\gcd(a_{0},q_{0})=1$. Fix $0<C_1<C_2$, for a sufficiently large integer $N$ define
$$\mathcal{P}_{N}:=\{\,p\ \text{prime}:\ C_{1}\sqrt{N} \le p \le C_{2}\sqrt{N}\}, \quad D_{p,a}:=\left[x_{A}, x_{A} + \frac{1}{p^{2}}\right]$$
with $x_{A}=\frac{A}{q_{0}p}\in (0,1)$, where $A$ depends only on $a_0 b$ and $a$ satisfying $A\equiv a_{0}b \pmod{q_{0}}$, $A \equiv a \pmod{p}$
with $a\in R(p)$. Then for any continuous function $F:\ [0,1]\to\mathbb{R}$, the box dimension of its graph satisfies
\begin{align*} \underline{\dim}_{\rm B}(\text{graph } F) \ge \varliminf_{N \to\infty}\frac{\log\left(N\sum_{p \in \mathcal{P}_{N}}\sum_{a \in R(p)}\osc(F;D_{p,a})\right)}{\log N}.
\end{align*}
\end{lemma}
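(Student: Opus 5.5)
We compare the oscillation sum with a box count at scale $\varepsilon=N^{-1}$; the basic tool is the standard inequality (e.g.\ Falconer, Prop.~11.1)
\[
N_{\varepsilon}(\text{graph }F)\ \ge\ \varepsilon^{-1}\sum_{I}\osc(F;I),
\]
where the sum runs over any family of pairwise disjoint intervals $I$ of length at most $\varepsilon$. Each column of width $\varepsilon$ above such an $I$ must contain at least $\osc(F;I)/\varepsilon$ boxes of the $\varepsilon$-mesh meeting the graph, by continuity of $F$ and the intermediate value theorem. Since $|D_{p,a}|=p^{-2}\le C_1^{-2}N^{-1}$, the intervals $D_{p,a}$ have length comparable to $\varepsilon=N^{-1}$, with constants depending only on $C_1,C_2$. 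Taking logarithms, dividing by $\log N=\log \varepsilon^{-1}$ and passing to $\varliminf$ gives the stated bound. So the plan is to verify that the family $\{D_{p,a}\}$ is essentially disjoint (or has bounded overlap), and then to apply the column-counting argument.

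\textbf{Step 1: the points are well defined and distinct.} Since $p\ge C_1\sqrt N$ is large, $p\nmid q_0$, so the Chinese remainder theorem gives a unique residue $A\bmod q_0p$ with $A\equiv a_0b \pmod{q_0}$ and $A\equiv a\pmod p$. We fix the representative with $1\le A< q_0p$, so $x_A\in(0,1)$. Distinct pairs $(p,a)$ give distinct rationals $A/(q_0p)$. For a fixed $p$ this holds because the residues $a$ differ. For $p\ne p'$, equality $A/(q_0p)=A'/(q_0p')$ would force $p\mid A$; this contradicts $A\equiv a\not\equiv 0 \pmod p$, since quadratic residues are taken among the units.

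\textbf{Step 2: separation of the points.} For $(p,a)\ne(p',a')$ we have
\[
\Bigl|\frac{A}{q_0p}-\frac{A'}{q_0p'}\Bigr|=\frac{|Ap'-A'p|}{q_0pp'}\ \ge\ \frac{1}{q_0pp'}\ \ge\ \frac{1}{q_0C_2^2N}.
\]
Compared with the interval lengths $p^{-2}\le C_1^{-2}N^{-1}$, this shows that each point of $[0,1]$ lies in at most $K=O(q_0C_2^2/C_1^2)$ of the intervals $D_{p,a}$. Indeed, the left endpoints of intervals containing a given point all lie in a window of length at most $C_1^{-2}N^{-1}$, and any two of them are at least $(q_0C_2^2N)^{-1}$ apart. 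We expect this bounded-overlap step to be the main obstacle, because the composite denominator $q_0$ weakens the usual Farey spacing. The constant $K$ depends only on $q_0$, $C_1$, $C_2$, so it disappears after taking logarithms.

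\textbf{Step 3: the box count.} Split the family into $K$ subfamilies of pairwise disjoint intervals. A greedy interval-graph colouring works, since the interval graph has clique number at most $K$. Cover $[0,1]$ by the $\varepsilon$-mesh with $\varepsilon=C_1^{-2}N^{-1}$, so that each $D_{p,a}$ lies in the union of at most two consecutive mesh columns. On each column, $F$ takes all values between its minimum and maximum there, so the number of mesh squares in that column meeting the graph is at least $\osc(F;\text{column})/\varepsilon\ge \osc(F;D_{p,a})/\varepsilon$ for any $D_{p,a}$ inside it. Each column contains, or meets, at most $O(K)$ of the intervals. Summing over columns therefore gives
\[
N_\varepsilon(\text{graph }F)\ \gtrsim_{K}\ N\sum_{p\in\mathcal P_N}\sum_{a\in R(p)}\osc(F;D_{p,a}).
\]
Since $\log \varepsilon^{-1}=\log N+O(1)$, dividing by $\log\varepsilon^{-1}$ and taking $\varliminf_{N\to\infty}$ yields the claim. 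It is enough to use the integers $N$ because the scales $\varepsilon\asymp N^{-1}$ are comparable at consecutive integers.
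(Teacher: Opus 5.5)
Your proposal is correct and is essentially the paper's argument: the spacing $|A/(q_0p)-A'/(q_0p')|\ge (q_0C_2^2N)^{-1}$ of the left endpoints bounds how many $D_{p,a}$ meet each grid column, and subadditivity of $\osc$ combined with the column count $N_\varepsilon\ge\varepsilon^{-1}\sum_{\text{col}}\osc(F;\text{col})$ gives the bound, just as in Steps 1--2 of the paper. Your check that distinct pairs $(p,a)$ give distinct endpoints fills in a point the paper leaves implicit, while the interval-graph colouring is not needed. One small caveat: the ``standard inequality'' you quote at the start is false for arbitrary disjoint intervals of length at most $\varepsilon$, because many tiny intervals inside one column can each carry the full oscillation; your Step~3, however, only uses it with boundedly many intervals per column, where it is valid.
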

\begin{proof}
\textbf{Step 1 (Bounding the number of $D_{p,a}$ meeting each partition interval).} Let $\mathcal{D} = \{D_{p,a} : p \in \mathcal{P}_{N}, a \in R(p)\}$, and set $I_{k} = \left[\frac{k}{N}, \frac{k+1}{N}\right]$ for $0 \le k \le N-1$. We claim that there exists a constant $\tilde{C}$ depending only on $C_{1}$ and $C_{2}$ such that for each $k$, at most $\tilde{C}$ intervals in $\mathcal{D}$ intersect $I_{k}$.

\par Indeed, each interval $D_{p,a} \in \mathcal{D}$ has length $\frac{1}{p^{2}} \in [\delta,\Delta]$, where
$$\delta:=\frac{1}{q_0\,C_{2}^{2}N}, \quad \Delta := \frac{1}{C_{1}^{2}N}. $$ 
Notice that if $\frac{A}{q_{0}p}\ne\frac{A'}{q_{0}p'}$ with $p,\,p'\in\mathcal{P}_N$, then using $|Ap'-A'p|\ge1$ and
$p,\,p'\le C_{2}\sqrt{N}$ obtains
\begin{equation}\label{delta}\left|\frac{A}{q_{0}p}-\frac{A'}{q_{0}p'}\right|=\frac{|Ap'-A'p|}{q_{0}pp'}\ge\frac{1}{q_{0}pp'}\ge \frac{1}{q_{0}C_{2}^{2}N}:=\delta.\end{equation} 
Thus, if $D_{p,a}=\left[x_{A}, x_{A}+\frac{1}{p^{2}}\right]$ intersects $I_{k}$, then $x_{A} \le \frac{k+1}{N}$ and $x_{A} + \frac{1}{p^{2}} \ge \frac{k}{N}$, which implies 
$$ x_{A} \in \left[\frac{k}{N} - \frac{1}{p^{2}}, \frac{k+1}{N}\right] \subseteq \left[\frac{k}{N} - \frac{1}{C_{1}^{2}N}, \frac{k+1}{N}\right] := J_{k}. $$ 
The length of $J_{k}$ is $\frac{1}{N}\left(\frac{1}{C_{1}^{2}}+1\right):= l$. By \eqref{delta}, any two distinct left endpoints $x_{A} = \frac{A}{q_{0}p}$ and $x_{A'} = \frac{A'}{q_{0}p'}$ corresponding to different intervals $D_{p,a},D_{p',a'} \in \mathcal{D}$ satisfy $|x_{A} - x_{A'}| \ge \delta$, we have 
\begin{equation} \label{eq:Cwidetilde}
\text{card}\{D_{p,a} \in \mathcal{D} : D_{p,a} \cap I_{k} \ne \emptyset\} \le \left[\frac{l}{\delta}\right] + 1 = \left[\frac{q_{0}C_{2}^{2}(1+C_{1}^{2})}{C_{1}^{2}}\right] + 1:=\widetilde{C}.
\end{equation} 

\textbf{Step 2 (Lower bound on the box dimension).} Denote by $A_{N}(F)$ the minimum number
of boxes of size $1/N$ required to cover $\text{graph}(F)$. Then
\begin{equation} \label{eq:box_cover}
A_{N}(F) \ge N\sum_{k=0}^{N-1}\osc(F;I_{k}),
\end{equation} 
where $\osc(F;I) = \sup_{x \in I}F(x) - \inf_{x \in I}F(x)$. Since $D_{p,a} \subseteq [0,1]$, we have 
$$ \osc(F;D_{p,a}) \le \sum_{k : I_{k} \cap D_{p,a} \ne \emptyset}\osc(F;I_{k}). $$ 
Applying \eqref{eq:Cwidetilde} gains
$$ \sum_{p \in \mathcal{P}_{N}}\sum_{a \in R(p)}\osc(F;D_{p,a}) \le \widetilde{C}\sum_{k=0}^{N-1}\osc(F;I_{k}). $$ 
Combining this with \eqref{eq:box_cover}, we obtain $A_{N}(F) \ge \frac{N}{\widetilde{C}}\sum_{p \in \mathcal{P}_{N}}\sum_{a \in R(p)}\osc(F;D_{p,a})$. Since $\widetilde{C}$ is independent of $N$, the lower box dimension satisfies 
$$ \underline{\dim}_{\rm B}(\text{graph } F) = \varliminf_{N \to\infty}\frac{\log A_{N}(F)}{\log N} \ge \varliminf_{N \to\infty}\frac{\log\left(N\sum_{p \in \mathcal{P}_{N}}\sum_{a \in R(p)}\osc(F;D_{p,a})\right)}{\log N}. $$
\end{proof}

The following Lemma is crucial for estimating the error term in {\rm Theorem \ref{thm:criterion}} and can be found in \cite{Wu-Zhan2026}. We give its detailed
proof for readers' convenience.
\begin{lemma} \label{lem:density}
Let $\mathbb{P}$ be the set of all primes. Suppose $\{a_{k}\}_{k \ge 1} \subset \mathbb{C}$ satisfy $\sum_{k=1}^{\infty}|a_{k}|k^{\beta} < \infty$ for some constant $\beta > 0$. Then for any subset $\mathcal{P} \subset \mathbb{P}$ with natural density $d_{\mathbb{P}}(\mathcal{P}) > 0$ and any constant $C > 0$, the set 
$$ E_{C} := \left\{p \in \mathcal{P} : \sum_{p \mid k}|a_{k}| \le C p^{-\beta-1}\right\} $$ 
has natural density $d_{\mathcal{P}}(E_{C}) = 1$. 
\end{lemma}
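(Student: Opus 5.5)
The plan is a weighted Chebyshev (first-moment) count of the exceptional primes, with the order of summation swapped. Write $T(p):=\sum_{p\mid k}|a_k|$ and let $B_C:=\mathcal P\setminus E_C=\{p\in\mathcal P:\ T(p)>C p^{-\beta-1}\}$. For every $p\in B_C$ we have $1<C^{-1}p^{\beta+1}T(p)$. Counting the bad primes with weight $1/p$ therefore gives
\begin{equation*}
\sum_{p\in B_C}\frac1p\ \le\ \frac1C\sum_{p\in\mathbb P}p^{\beta}T(p)\ =\ \frac1C\sum_{k\ge1}|a_k|\sum_{p\mid k}p^{\beta},
\end{equation*}
where the interchange is justified because all terms are nonnegative.

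\textbf{Step 1 (the divisor inequality).} I will show $\sum_{p\mid k}p^{\beta}\le c_\beta k^{\beta}$ with $c_\beta$ depending only on $\beta$. Split the prime divisors of $k$ into two groups.
\begin{itemize}
\item The primes with $p^{\beta}<2$, i.e.\ $p<2^{1/\beta}$. There are at most $2^{1/\beta}$ of them, so they contribute at most $2\cdot 2^{1/\beta}\le 2^{1+1/\beta}k^{\beta}$.
\item The remaining primes, for which every term $y_i=p_i^{\beta}$ is $\ge2$. For such numbers $\sum_i y_i\le\prod_i y_i$, by induction using $y+z\le yz$ when $y,z\ge2$. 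Since the $p_i$ are distinct divisors of $k$, $\prod_i p_i^{\beta}\le k^{\beta}$.
\end{itemize}
Hence the whole sum is at most $(1+2^{1+1/\beta})k^{\beta}$. Combined with the display above and the hypothesis $\sum|a_k|k^\beta<\infty$, this yields
\begin{equation*}
\sum_{p\in B_C}\frac1p\ \le\ \frac{c_\beta}{C}\sum_{k\ge1}|a_k|k^{\beta}<\infty .
\end{equation*}
The same bound holds for every tail $\sum_{p\in B_C,\,p>y}1/p$, with the right-hand side replaced by $\frac{c_\beta}{C}\sum_{k>y}|a_k|k^\beta$, which tends to $0$. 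This uses that $p\mid k$ forces $k\ge p$.

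\textbf{Step 2 (from a convergent reciprocal sum to density).} Since $d_{\mathbb P}(\mathcal P)>0$, it suffices to show $B_C$ is negligible among primes. Convergence of $\sum_{p\in B_C}1/p$ gives this immediately in the logarithmic sense: $\sum_{p\in B_C,\,p\le x}1/p=O(1)=o(\log\log x)$. By partial summation it also gives that the lower natural density of $B_C$ relative to $\mathbb P$ is $0$. For the counting version on windows, which is what is used later with $p\in\mathcal P_N=[C_1\sqrt N,C_2\sqrt N]$, I will run the same Chebyshev count on a single window $(y,Ky]$:
\begin{equation*}
\#\{p\in B_C\cap(y,Ky]\}\ \le\ \frac{Ky}{C}\sum_{k>y}|a_k|k^{\beta}=o(y).
\end{equation*}
The goal is then to upgrade this to $o(y/\log y)=o\big(\pi(Ky)-\pi(y)\big)$, which is what is needed to conclude $d_{\mathcal P}(E_C)=1$.

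\textbf{Main obstacle.} The main obstacle is exactly this last upgrade, i.e.\ removing one factor of $\log y$. The first moment alone does not do it. The configuration $a_p=2Cp^{-\beta-1}$ on the primes of sparse dyadic blocks $(2^{j},2^{j+1}]$, with $j$ running over the perfect squares and $a_k=0$ otherwise, keeps $\sum|a_k|k^\beta$ finite while making entire blocks of primes bad. So my first attempt will be to exploit the multiplicative constraint in Step 1 more sharply. The relevant observation is that a single $k$ can be divisible by $m$ primes from $(y,Ky]$ only if $k\ge y^{m}$, which costs a factor $y^{-(m-1)\beta}$. I will try to show that, apart from the $m=1$ terms, the bad primes in a window are controlled by $o(y/\log y)$.

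If the $m=1$ terms (essentially $a_k$ with $k$ prime, or $k$ with a single large prime factor) resist, I will use the tail bound of Step 2 in the form actually needed in the proof of Theorem~\ref{thm:criterion}. There it suffices that for infinitely many, or for a positive logarithmic proportion of, windows $\mathcal P_N\cap\mathcal P$, most primes lie in $E_C$. That statement does follow from $\sum_{p\in B_C}1/p<\infty$. In writing up the lemma, I would state explicitly which of these density notions is established.
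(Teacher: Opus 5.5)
\textbf{There is a genuine gap, and it cannot be filled: the lemma is false as stated.} Your proposal never reaches the stated conclusion. Step 2 stops at $\sum_{p\in B_C}1/p<\infty$ and leaves the upgrade to natural density as an unresolved ``main obstacle''. That upgrade is impossible. The dyadic-block configuration you wrote down is a counterexample to the lemma itself, not merely to the first-moment method. Take $\mathcal P=\mathbb P$, put $a_p=2Cp^{-\beta-1}$ for primes $p\in(2^j,2^{j+1}]$ with $j$ a perfect square, and set $a_k=0$ otherwise. By Chebyshev, $\sum_{2^j<p\le 2^{j+1}}1/p\ll 1/j$, so $\sum_k|a_k|k^\beta\ll C\sum_{m\ge1}m^{-2}<\infty$. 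Each prime in a block has $\sum_{p\mid k}|a_k|\ge a_p>Cp^{-\beta-1}$, so it lies outside $E_C$. By the prime number theorem the single block $(2^j,2^{j+1}]$ contains $\sim\frac12\pi(2^{j+1})$ primes, hence $\liminf_{x\to\infty}\#\{p\le x:\ p\in E_C\}/\pi(x)\le\frac12$.

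Only prime $k$ (your $m=1$ terms) occur in this example, so your planned sharpening via $k\ge y^m$ for $m\ge2$ cannot help. Your closing hedge should have been a definite conclusion. The correct statement is the one your Step 1 already proves: $\sum_{p\in\mathcal P\setminus E_C}1/p\le\frac{c_\beta}{C}\sum_k|a_k|k^\beta<\infty$, together with its tail version. Thus $\mathcal P\setminus E_C$ has logarithmic density $0$ and lower density $0$, so $E_C$ has upper density $1$, but not natural density $1$. Step 1 itself is correct. Splitting off the primes with $p^\beta<2$ and using $\sum y_i\le\prod y_i$ for $y_i\ge2$ is a cleaner proof of $\sum_{p\mid k}p^\beta\le c_\beta k^\beta$ than the paper's comparison with $s!$.

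\textbf{Comparison with the paper.} The paper's proof is the same first-moment argument. It reaches density $1$ only through a faulty contradiction step: it negates $d_{\mathcal P}(E_C)=1$ as ``$G_C=\mathcal P\setminus E_C$ has positive natural density''. The negation only gives positive \emph{upper} density, and positive upper density does not force $\sum_{p\in G_C}1/p=\infty$; your block set is exactly such a $G_C$. So the paper's proof has the same hole you isolated. As a consequence, Step 4 of Theorem~\ref{thm:criterion} uses $\mathrm{card}(E_1\cap\mathcal P_N)\sim N^{1/2}/(\varphi(B)\log N^{1/2})$ for all large $N$, and this is not available, since blocks of bad primes can swallow whole windows $\mathcal P_N$.

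\textbf{Your fallback also needs adjusting.} ``Infinitely many windows'' would not suffice there, because Lemma~\ref{lem:composite_denom} involves a $\varliminf$ in $N$. Your reciprocal-sum bound gives something stronger. Consider the scales $y$ at which more than a fixed fraction of $\mathcal P\cap[C_1y,C_2y]$ is bad; this set has finite $\frac{dy}{y\log y}$-measure. Hence for each $\eta>0$ and every large $N$ there is a good window at some scale $N'\in[N^{1-\eta},N]$. That is the form a repaired argument would have to use.
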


\begin{proof}
Denote $M = \sum_{k=1}^{\infty}|a_{k}|k^{\beta}$, and for each prime $p$, define the weighted sum $S_{p} = \sum_{p \mid k}|a_{k}|$. 
Then
\begin{equation} \label{eq:W_sum}
W := \sum_{p \in \mathcal{P}}p^{\beta}S_{p} = \sum_{p \in \mathcal{P}}p^{\beta}\sum_{p \mid k}|a_{k}| = \sum_{k=1}^{\infty}|a_{k}|\sum_{p \in \mathcal{P}, \, p \mid k}p^{\beta}.
\end{equation} 

\textbf{Claim:} There exists a constant $C_{\beta} > 0$ depending only on $\beta$ such that for all $k \ge 2$ we have 
\begin{equation} \label{eq:C_beta}
\sum_{p \in \mathbb{P}, \, p \mid k}p^{\beta} \le C_{\beta}k^{\beta}.
\end{equation} 

\par Indeed, each $k$ can be factorized as $k = p_{1}^{\alpha_{1}}p_{2}^{\alpha_{2}}\dots p_{s}^{\alpha_{s}}$ with primes $p_{1} < p_{2} < \dots < p_{s}$ and $\alpha_{i} \ge 1 \, (1 \le i \le s)$, which implies 
$$ \sum_{p \in \mathbb{P}, \, p \mid k}p^{\beta} = \sum_{i=1}^{s}p_{i}^{\beta}. $$ 
This, together with $k^{\beta} \ge (p_{1}p_{2}\dots p_{s})^{\beta}$ since $\alpha_{i} \ge 1$, gives 
\begin{equation} \label{eq:R_k}
R(k) := \frac{1}{k^{\beta}}\sum_{p \in \mathbb{P}, \, p \mid k}p^{\beta} \le \frac{\sum_{i=1}^{s}p_{i}^{\beta}}{\prod_{j=1}^{s}p_{j}^{\beta}} = \sum_{i=1}^{s}\prod_{j \ne i}p_{j}^{-\beta}.
\end{equation} 
Let $\{q_{n}\}_{n \ge 1}$ label the sequence of all primes ordered increasingly. Since $p_{n} \ge q_{n} \ge n$ and $q_{n} \sim n\log n$ as $n \to \infty$, by \eqref{eq:R_k} we derive 
$$ R(k) \le \sum_{i=1}^{s}\prod_{j \ne i}q_{j}^{-\beta} = \frac{\sum_{i=1}^{s}q_{i}^{\beta}}{\prod_{j=1}^{s}q_{j}^{\beta}} \le \frac{s^{\beta+1}(\log s)^{\beta}}{(s!)^{\beta}}. $$ 
Combining this with $\lim_{s \to \infty}\frac{s^{\beta+1}(\log s)^{\beta}}{(s!)^{\beta}} = 0$ guarantees that there must exist a constant $C_{\beta} > 0$ depending only on $\beta$ such that $R(k) \le C_{\beta}$ for all $k$, so the Claim \eqref{eq:C_beta} holds. 

\par Substituting \eqref{eq:C_beta} into \eqref{eq:W_sum} yields 
$$ W = \sum_{p \in \mathcal{P}}p^{\beta}S_{p} \le \sum_{k=1}^{\infty}|a_{k}|C_{\beta}k^{\beta} = C_{\beta}M < \infty. $$ 
Thus $\sum_{p \in \mathcal{P}}\frac{1}{p}(p^{1+\beta}S_{p}) < \infty$. Set $\lambda_{p} := p^{1+\beta}S_{p}$, then $\sum_{p \in \mathcal{P}}\frac{1}{p}\lambda_{p} < \infty$. 

\par Suppose the Lemma is false. Then there exist a subset $\mathcal{P} \subset \mathbb{P}$ with natural density $d_{\mathbb{P}}(\mathcal{P}) > 0$ and a constant $C > 0$ such that the set 
$$ E_{C} := \left\{p \in \mathcal{P} : \sum_{p \mid k}|a_{k}| \le C p^{-\beta-1}\right\} = \{p \in \mathcal{P} : \lambda_{p} \le C\} $$ 
has natural density $d_{\mathcal{P}}(E_{C}) < 1$. So its complement 
$$ G_{C} := \mathcal{P} \setminus E_{C} = \{p \in \mathcal{P} : \lambda_{p} > C\} $$ 
has natural density $d_{\mathcal{P}}(G_{C}) > 0$. Then by $d_{\mathbb{P}}(\mathcal{P}) > 0$, we have $d_{\mathbb{P}}(G_{C}) > 0$. This implies 
$$ \sum_{p \in G_{C}}\frac{1}{p} = \infty, $$ 
and thus
$$ \sum_{p \in G_{C}}\frac{1}{p}\lambda_{p} \ge C\sum_{p \in G_{C}}\frac{1}{p} = \infty, $$ 
which contradicts
$$ \sum_{p \in G_{C}}\frac{1}{p}\lambda_{p} \le \sum_{p \in \mathcal{P}}\frac{1}{p}\lambda_{p} < \infty. $$ 
Therefore, Lemma \ref{lem:density} follows.
\end{proof}

\par Now we restate and prove Theorem \ref{thm:criterion} step by step.
\begin{theorem} {\rm(Theorem \ref{thm:criterion})}
Let $g(x)$ be a {\rm 1}-periodic continuous real-valued function with Fourier coefficients $C_{k}$ and
$G_{\delta}(x)= \sum_{n=1}^{\infty}\frac{g(n^{2}x)}{n^{1+\delta}}$ for $0<\delta\le1$. If $\sum_{k=1}^{\infty}|C_{k}|k^{\frac{\epsilon+\delta}{2}}<\infty$ for some $\epsilon>0$, and
there exist positive integers $a_{0}$ and $q_{0}$ with $\gcd(a_{0},q_{0})=1$ and a positive
square-free integer $d_0$ such that \begin{align}\label{non-vanishing condition}
S_{d_0}(a_{0},q_{0})=\sum\limits_{\substack{k=m^{2}d_0\\ m\in\mathbb{N}^+}}C_k\,k^{\delta/2}G(ka_{0};q_{0}) \ne 0,
\end{align}
then $\dim_{\rm B}(\text{graph}(G_{\delta}))\ge\frac{7}{4}-\frac{\delta}{2}$.
\end{theorem}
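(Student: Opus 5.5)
We combine Lemma~\ref{lem:composite_denom} with a lower bound on the oscillation of $G_\delta$ over the intervals $D_{p,a}$. The target is that for $N$ large there are $\gtrsim \sqrt N/\log N$ primes $p\in\mathcal P_N\cap\mathcal P$ for which
\[
\osc\bigl(G_\delta;D_{p,a}\bigr)\gtrsim p^{-\delta}\quad\text{for every } a\in R(p).
\]
Since $\#R(p)\asymp p$, this would give
\[
N\sum_{p}\sum_{a}\osc(G_\delta;D_{p,a})\gtrsim N\cdot\frac{\sqrt N}{\log N}\cdot \sqrt N\cdot N^{-\delta/2}=\frac{N^{2-\delta/2}}{\log N}.
\]
That exponent is too large, so I must recheck the scaling. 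In the classical argument the oscillation at $a/p$ over a length $p^{-2}$ is of size $p^{-1/2-\delta}$. The Gauss sum modulo $p$ gives $\sum_n e(an^2/p)n^{-1-\delta}$-type terms of size $p^{-1/2}\bigl(\tfrac{k}{p}\bigr)\cdot p^{-\delta}$. The count then gives $N\cdot (\sqrt N/\log N)\cdot \sqrt N\cdot N^{-1/4-\delta/2}=N^{7/4-\delta/2}/\log N$, which yields the claimed exponent $7/4-\delta/2$. So the real target is
\[
|G_\delta(x_A+p^{-2})-G_\delta(x_A)|\ge c\,p^{-1/2-\delta}.
\]

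To evaluate the increment, expand
\[
G_\delta(x_A+p^{-2})-G_\delta(x_A)=-\sum_k C_k\sum_n \frac{e(kn^2A/(q_0p))\bigl(1-e(kn^2/p^2)\bigr)}{n^{1+\delta}}.
\]
By the Chinese remainder theorem, $A/(q_0p)\equiv a_0b\bar p/q_0+a\bar q_0/p \pmod 1$, with inverses modulo $q_0$ and $p$ respectively, so the phase splits into a $q_0$-periodic factor and a $p$-periodic factor. I would then write $n=r+pm$ over residues $r$ modulo $p$ and apply the twisted Poisson argument of Lemma~\ref{lem:twisted}, now at level $q_0p$ rather than $q_0$ (so $p$ replaces $p^2$ in the right way). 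The main term for $p\nmid k$ is the product
\[
G(ka_0b\bar p;q_0)\,G(ka\bar q_0;p)\,p^{-\delta}M_k.
\]
Here $G(ka\bar q_0;p)=\varepsilon_p p^{-1/2}\bigl(\tfrac{ka\bar q_0}{p}\bigr)$. Because $a\in R(p)$, this equals $\varepsilon_p p^{-1/2}\bigl(\tfrac{k}{p}\bigr)\bigl(\tfrac{q_0}{p}\bigr)$, so its absolute value does not depend on $a$. The congruence $p\equiv b \pmod{8q_0}$ fixes both $\bar p$ modulo $q_0$ and $\bigl(\tfrac{q_0}{p}\bigr)$ via reciprocity. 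Hence the main term becomes $p^{-1/2-\delta}$ times a unimodular constant times
\[
\sum_k C_k k^{\delta/2}M_1^{(\pm)}\,G(ka_0';q_0)\Bigl(\tfrac{k}{p}\Bigr),
\]
handling $k<0$ through conjugation. This is $\Theta_p$ of Lemma~\ref{freezing-lem} for a suitable sequence $\lambda_k\propto C_k|k|^{\delta/2}$. Choosing $b,B$ as in that lemma gives $|\Theta_p|\ge\eta$ on $\mathcal P$, using the non-vanishing of $S_{d_0}(a_0;q_0)$. Since $\mathcal P$ has positive density among primes by Dirichlet's theorem and the prime number theorem, $\mathcal P\cap\mathcal P_N$ has $\asymp\sqrt N/\log N$ elements.

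Three error terms remain. The first is the Poisson remainder, which Lemma~\ref{lem:twisted} (adapted) bounds by $q_0p\,|k|^{(\alpha+\delta)/2}p^{-\alpha}\cdot p^{-1-\delta}$ per $k$. Taking $\alpha=\epsilon$ and summing against $|C_k|$ uses the hypothesis $\sum|C_k|k^{(\epsilon+\delta)/2}<\infty$, giving $o(p^{-1/2-\delta})$ after fixing $\alpha$ slightly above $1/2$ where admissible, or by interpolating more carefully. The second is the terms with $p\mid k$, where the Gauss sum degenerates. The bound $|T_k|\lesssim p^{-\delta}|k|^{\delta/2}$ from Lemma~\ref{lem:twisted} controls them by $p^{-\delta}\sum_{p\mid k}|C_k|k^{\delta/2}$. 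Lemma~\ref{lem:density}, applied with $a_k=C_kk^{\delta/2}$ and $\beta=\epsilon/2$, shows this is $\le Cp^{-\delta-1-\epsilon/2}$ for a density-one subset of $\mathcal P$. The third is the replacement of $\osc$ by the increment, which is immediate.

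The main obstacle is the error estimate at composite level $q_0p$. The crude $q_0$-factor bound in Lemma~\ref{lem:twisted} would cost a full factor $p$ if applied with modulus $q_0p$. The Fourier coefficients of the $p$-periodic phase must instead be bounded by $p^{-1/2}$ through Gauss sums. This square-root saving is what makes the remainder $o(p^{-1/2-\delta})$, and that is where I would spend the effort.
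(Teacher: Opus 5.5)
Your plan rests on a pointwise bound $|\Delta(a)|\ge c\,p^{-1/2-\delta}$ for every $a\in R(p)$. You propose to get it by running the Poisson argument of Lemma~\ref{lem:twisted} at level $q_0p$, with Gauss-sum control of the Fourier coefficients. That step fails.

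\textbf{Why the level-$q_0p$ estimate fails.} With the $q_0p$-periodic phase $e(kn^2A/(q_0p))$ and profile $h_k(n/p)$, the dual frequencies are $pm-t/q_0=(q_0pm-t)/q_0$. These run over all of $\frac{1}{q_0}\mathbb{Z}$, not over $\frac{p}{q_0}\mathbb{Z}$ as in the level-$q_0$ lemma. Hence $\sum_{j\ne0}|\widehat{h}_k(j/q_0)|\asymp q_0|k|^{(1+\delta)/2}$, with no decay in $p$.

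So even with $|\widehat{c}(t)|\le p^{-1/2}$, the remainder is only $O(q_0|k|^{(1+\delta)/2}p^{-1/2-\delta})$. That is the same order in $p$ as the main term $|G(ka_0;q_0)M_k|\,p^{-1/2-\delta}$, not $o(p^{-1/2-\delta})$. The remainder coefficients have the form $G(c;p)\,e(-\overline{4c}\,s^2/p)$ with $c\equiv ka\overline{q}_0$, so they carry $a$-dependent phases. No cancellation in $j$ is available uniformly in $a$. The real obstruction is that $h=p^{-2}$ is exactly the critical scale $h\asymp(q_0p)^{-2}$ for the denominator $q_0p$, where the leading term does not dominate the correction terms.

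Your fallback of taking $\alpha$ slightly above $1/2$ is not available either. The hypothesis gives $\sum|C_k|k^{(\epsilon+\delta)/2}<\infty$ only for some possibly tiny $\epsilon$. Interpolating against the trivial bound $p^{-\delta}|k|^{\delta/2}$, which lacks the factor $p^{-1/2}$, cannot produce $o(p^{-1/2-\delta})$. Summing your remainder over $k$ would in any case need $\sum|C_k|k^{(1+\delta)/2}<\infty$.

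\textbf{The missing idea.} Average over $a$ before doing any Poisson summation. You only need the aggregate, since $\sum_{a\in R(p)}\osc(G_\delta;D_{p,a})\ge\bigl|\sum_{a\in R(p)}\Delta(a)\bigr|$.
\begin{itemize}
\item Summing $e(kn^2a\overline{q}_0/p)$ over $a\in R(p)$ gives $\frac12\bigl[\varepsilon_p\sqrt p\bigl(\frac{k}{p}\bigr)\bigl(\frac{q_0}{p}\bigr)-1\bigr]$ when $p\nmid kn$, and $\frac{p-1}{2}$ when $p\mid k$.
\item The terms with $p\mid n$ vanish, because $1-e(kn^2/p^2)=0$.
\item This removes the mod-$p$ phase entirely. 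With $A\equiv a_0b\pmod{q_0}$ and $p\equiv b\pmod{q_0}$, the only twist left is $e(ka_0n^2/q_0)$.
\item Lemma~\ref{lem:twisted} now applies at level $q_0$, where the dual frequencies are $pj/q_0$, and gives a genuine saving $p^{-\epsilon}$.
\item The main term $|I_1|=\frac12p^{1/2-\delta}|\Theta_p|\ge\frac{\eta}{2}p^{1/2-\delta}$ then dominates $I_2=O(p^{1/2-\delta-\epsilon})$, $I_3=O(p^{-\delta})$, and $I_4$ (controlled via Lemma~\ref{lem:density}).
\end{itemize}
This aggregate bound of order $p^{1/2-\delta}$ is exactly what Lemma~\ref{lem:composite_denom} needs. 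Your prime counting and your treatment of the $p\mid k$ terms carry over unchanged.
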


\begin{proof}
\textbf{Step 1: Variation on quadratic residues.}
\par Let $M_k$ be as in Lemma \ref{lem:twisted} and set\ $\lambda_k:=C_k M_k$. For $k\geq 0$, change of variable \(u = t/\sqrt{k}\) shows that
\[M_k = \int_0^\infty \frac{1-e^{2\pi i k u^{2}}}{u^{1+\delta}}\,du
= k^{\delta/2}\int_0^\infty \frac{1-e^{2\pi i t^{2}}}{t^{1+\delta}}\,dt = k^{\delta/2} M_1.\]
 First $M_{-k}=\overline{M_k}$ implies $\lambda_{-k} = \overline{\lambda_{k}}$. On the other hand
 we have
\[\sum_{k\in\mathbb{Z}}|\lambda_k|=2|M_1|\sum_{k=1}^{\infty}|C_k| k^{\frac{\delta}{2}}
\lesssim\sum_{k=1}^{\infty}|C_k| k^{\frac{\varepsilon+\delta}{2}}<\infty.\]

\par Furthermore, combining $\lambda_k= M_1C_k k^{\delta/2}$ for $k>0$ and the non-vanishing condition (\ref{non-vanishing condition}) we obtain
$S_{d_0}(a_{0},q_{0}):=\sum_{m=1}^{\infty}\lambda_{m^2 d_0}G(a_{0}m^2 d_0;q_{0}) \ne 0$.
Thus by Lemma~\ref{freezing-lem}, there exist positive integers $b$ and $B$ satisfying $8q_{0} \mid B$, $\gcd(b,B)=1$ and a set of odd primes
\[\mathcal{P}=\{\,p\ \text{odd\ prime}:\ p\equiv b\,({\rm mod}\,B)\}\]
such that for all \(p\in\mathcal{P}\) we have
\[\Bigl|\sum_{k\in\mathbb{Z}}\lambda_{k}G(ka_{0};q_{0})\left(\frac{k}{p}\right)\Bigr|\ge \eta>0,\]
where \(\eta=\bigl(\frac{\sqrt{2}}{2}-\frac12\bigr)|S_{d_0}(a_{0},q_{0})|\) and
\(S_{d_0}(a_{0},q_{0})=\sum\limits_{\substack{k=m^{2}d_0\\ m\in\mathbb{N}^+}}C_k\,k^{\delta/2}G(ka_{0};q_{0})\).

For prime $p$, let $R(p)$ denote the quadratic residue modulo $p$.
Take $a\in R(p)$ with $p\in\mathcal{P}$. Then $\gcd(q_{0},p)=1$ because $p\equiv b \pmod{B}$, $8q_{0} \mid B$ and $\gcd(b,B)=1$.
Choose $A\in\mathbb{N}^+$ such that $A \equiv a_{0}b \pmod{q_{0}}$ and $A \equiv a \pmod{p}$. Set $x_{A} = \frac{A}{q_{0}p}$ and 
$$ \Delta(a) = G_{\delta}(x_{A}) - G_{\delta}\left(x_{A}+\frac{1}{p^{2}}\right). $$ 

\par By expanding $\Delta(a)$, we have 
\begin{equation} \label{eq:delta_plus}
\Delta(a)=\sum_{k \ne 0}C_{k}\sum_{n=1}^{\infty}\frac{e(kn^{2}x_{A})(1-e(kn^{2}/p^{2}))}{n^{1+\delta}}.
\end{equation} 

\par Because $\gcd(q_{0},p)=1$, there exist integers $\overline{p}$ and $\overline{q}_{0}$ such that $p\overline{p}\equiv1\pmod{q_0}$ and $q_0\overline{q}_0\equiv1\pmod{p}$.
Since $A \equiv a_{0}b\pmod{q_0}$ and $A \equiv a\pmod{p}$,
the Chinese Remainder Theorem (see e.g.\cite[Lemma 2.3]{Wu-Zhan2026}) gives
$$A\equiv p\overline{p}a_{0}b+q_{0}\overline{q}_{0}a\pmod{q_0p}.$$ 
Thus,
\begin{equation} \label{eq:e_kn2}
e(kn^{2}x_{A}) = e\left(\frac{kn^{2}A}{q_{0}p}\right) = e\left(\frac{kn^{2}a_{0}b\overline{p}}{q_{0}}\right)e\left(\frac{kn^{2}a\overline{q}_{0}}{p}\right).
\end{equation} 
Since $8q_{0}\mid B$ and $p\equiv b \pmod{B}$,
we have $p\equiv b\pmod{q_0}$ and so $p\overline{p}\equiv b\overline{p}\pmod{q_0}$.
Then $p\overline{p}\equiv 1\pmod{q_0}$ suggests $b\overline{p}\equiv 1\pmod{q_0}$. Hence,
\begin{equation} \label{eq:e_a0}
e\left(\frac{kn^{2}a_{0}b\overline{p}}{q_{0}}\right) = e\left(\frac{kn^{2}a_{0}}{q_{0}}\right).
\end{equation} 

\par By $q_{0}\,\overline{q}_{0}\equiv1\pmod{p}$ we have
$\left(\frac{k\overline{q}_{0}}{p}\right)=\left(\frac{k}{p}\right)\left(\frac{\overline{q}_{0}}{p}\right)
=\left(\frac{k}{p}\right)\left(\frac{q_{0}}{p}\right)$. Substituting $k\overline{q}_0$ for $k$ in
\cite[(2.3)]{Wu-Zhan2026} attains
\begin{equation} \label{eq:sum_a}
\sum_{a \in R(p)}e\left(\frac{kn^{2}a\overline{q}_{0}}{p}\right) =
\begin{cases}
\frac{p-1}{2}, & \text{if } p \mid k, \\
\frac{1}{2}\left[\varepsilon_{p}\sqrt{p}\left(\frac{k}{p}\right)\left(\frac{q_{0}}{p}\right)-1\right], & \text{if } p \nmid k \text{ and } p \nmid n.
\end{cases}
\end{equation} 
Substituting \eqref{eq:e_kn2}, \eqref{eq:e_a0}, and \eqref{eq:sum_a} into \eqref{eq:delta_plus}, we obtain 
\begin{equation} \label{eq:delta_sum_final}
\sum_{a \in R(p)}\Delta(a) = \frac{\varepsilon_{p}\sqrt{p}}{2}\left(\frac{q_{0}}{p}\right)\sum_{k \ne 0}C_{k}\left(\frac{k}{p}\right)T_{k}(p) - \frac{1}{2}\sum_{k \ne 0}C_{k}T_{k}(p) + \frac{p-1}{2}\sum_{p \mid k}C_{k}T_{k}(p).
\end{equation} 

\textbf{Step 2: Isolating the main term.} By Lemma \ref{lem:twisted}, $T_{k}(p) = p^{-\delta}(G(ka_{0};q_{0})M_{k} + E_{k}(p))$. Substituting this into \eqref{eq:delta_sum_final} gives 
\begin{align}
\sum_{a \in R(p)}\Delta(a) &= \frac{\varepsilon_{p}}{2}p^{\frac{1}{2}-\delta}\left(\frac{q_{0}}{p}\right)\sum_{k \ne 0}C_{k}M_{k}\left(\frac{k}{p}\right)G(ka_{0};q_{0}) \notag \\
&\quad + \frac{\varepsilon_{p}}{2}p^{\frac{1}{2}-\delta}\left(\frac{q_{0}}{p}\right)\sum_{k \ne 0}C_{k}\left(\frac{k}{p}\right)E_{k}(p) - \frac{1}{2}\sum_{k \ne 0}C_{k}T_{k}(p) + \frac{p-1}{2}\sum_{p \mid k}C_{k}T_{k}(p) \notag \\
&:=I_1 + I_{2} + I_{3} + I_{4}.\label{eq:Lambda_def}
\end{align} 

\par Also, since $\left(\frac{k}{p}\right)=0$ if $p\mid k$, it follows by Step 1 that
\[|I_1| = \frac12 p^{1/2-\delta} \Bigl|\sum_{\substack{k\in\mathbb{Z}\\p\nmid k}}
\lambda_{k}G(ka_{0};q_{0})\left(\frac{k}{p}\right)\Bigr|
=\frac12 p^{1/2-\delta} \Bigl|\sum_{k\in\mathbb{Z}}\lambda_{k}G(ka_{0};q_{0})\left(\frac{k}{p}\right)\Bigr|\gtrsim p^{1/2-\delta}.
 \]

\textbf{Step 3: Bounding the error terms.} Applying Lemma \ref{lem:twisted} with $\alpha=\epsilon$ to $I_{2}$ yields
$$|I_{2}|\le\frac{1}{2}\,p^{\frac{1}{2}-\delta}\sum_{k\ne0}|C_{k}||E_{k}(p)|
\le\frac{1}{2}\,p^{\frac{1}{2}-\delta}\sum_{k\ne0}|C_{k}|C_\delta\,q_0|k|^{\frac{\epsilon+\delta}{2}}p^{-\epsilon}\le C_{\delta}^{'}p^{\frac{1}{2}-\delta-\epsilon}$$
with $C_{\delta}^{'}=C_\delta\,q_0\sum_{k=1}^{\infty}|C_{k}|k^{\frac{\epsilon+\delta}{2}}<\infty$. 

\par By \eqref{eq:T_k_direct} we have 
$$ |I_{3}| \le \frac{1}{2}\sum_{k \ne 0}|C_{k}||T_{k}(p)| \le C_{\delta}p^{-\delta}\sum_{k\ne0}|C_{k}||k|^{\frac{\delta}{2}} \le C_{\delta}^{''}p^{-\delta}$$ with $C_{\delta}^{''}=2\,C_{\delta}\sum_{k=1}^{\infty}|C_k|k^{\frac{\epsilon+\delta}{2}}<\infty$.

\par For $I_{4}$, since $\sum_{k=1}^{\infty}|C_{k}|k^{\frac{\epsilon+\delta}{2}} < \infty$ and $d_{\mathbb{P}}(\mathcal{P}) = \frac{1}{\varphi(B)} > 0$, applying Lemma \ref{lem:density} with $a_{k} = C_{k}$, $\beta = \frac{\epsilon+\delta}{2}$, and $C = 1$ yields a subset $E_{1} \subseteq \mathcal{P}$ with $d_{\mathcal{P}}(E_{1}) = 1$ such that 
$$ \sum_{p \mid k}|C_{k}| \le p^{-\frac{\epsilon+\delta}{2}-1} $$ 
for all $p \in E_{1}$. This, together with $|T_{k}(p)|\le2\zeta(1+\delta)$, yields 
$$ |I_{4}| \le p\left|\sum_{p \mid k}C_{k}T_{k}(p)\right| \le C p\sum_{p \mid k}|C_{k}|\le2\zeta(1+\delta)p^{-\frac{\epsilon+\delta}{2}}. $$ 

\par Therefore, combining the above estimates, we obtain
\begin{align}
\left|\sum_{a \in R(p)}\Delta(a)\right| &\gtrsim p^{1/2-\delta} \label{eq:max_variation}
\end{align} 
for all $p \in E_{1}$ and $0 < \delta \le 1$. 

\textbf{Step 4: From variation to dimension.} Let $\mathcal{P}_{N}$ and $E_{1}$ be as defined previously. Fix a large integer $N$ and choose primes $p \in E_{1}$ with $p\asymp N^{1/2}$. As $N \to \infty$, we have 
$$ \text{card}(E_{1} \cap \mathcal{P}_{N}) = \text{card}\{p \in E_{1} : p\asymp N^{1/2}\} \sim \frac{N^{1/2}}{\varphi(B)\log(N^{1/2})}. $$ 
Recall that $R(p)$ consists of quadratic residues $\pmod{p}$. 
Combining $\sum_{a \in R(p)}\osc(G_{\delta};D_{p,a}) \ge \left|\sum_{a \in R(p)}\Delta(a)\right|$ with Lemma \ref{lem:composite_denom} and \eqref{eq:max_variation}, we have 
\begin{align*}
\dim_{B}(\text{graph}(G_{\delta})) &\ge \lim_{N \to \infty}\frac{\log\left(N\sum_{p \in \mathcal{P}_{N}}\sum_{a \in R(p)}\osc(G_{\delta};D_{p,a})\right)}{\log N} \\
&\ge \lim_{N \to \infty}\frac{\log\left(N\sum_{p \in E_{1} \cap \mathcal{P}_{N}}\left|\sum_{a \in R(p)}\Delta(a)\right|\right)}{\log N} \\
&\ge \lim_{N \to \infty}\frac{\log\left(N\sum_{p \in E_{1} \cap \mathcal{P}_{N}}p^{\frac{1}{2}-\delta}\right)}{\log N} \\
&= \lim_{N \to \infty}\frac{\log\left(N \cdot N^{\frac{1}{4}-\frac{\delta}{2}}\text{card}(E_{1} \cap \mathcal{P}_{N})\right)}{\log N} \\
&= \frac{7}{4} - \frac{\delta}{2}.
\end{align*} 
This completes the proof of Theorem \ref{thm:criterion}. 
\end{proof}

\par It is our turn to give the proof of Theorem \ref{thm:dichotomy}.
\begin{theorem} {\rm (Theorem \ref{thm:dichotomy})}
Let $0<\delta\le1$ and let $g$ be a nonconstant real trigonometric polynomial. Then
$$\dim_{\rm B}(\mathrm{graph}\,G_\delta)=\frac74-\frac\delta2.$$
In particular, for any trigonometric polynomial $g$, no vanishing among the square-class sums $S_d$ can
lower the box dimension of the graph of corresponding $G_{\delta}$.
\end{theorem}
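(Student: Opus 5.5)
The equality splits into an upper bound and a lower bound. For the upper bound I would invoke the result of \cite{Wu-Zhan2026} quoted in the introduction: if $g'$ is Lipschitz then $\overline{\dim}_{\rm B}(\mathrm{graph}\,G_\delta)\le\frac74-\frac\delta2$. A real trigonometric polynomial is $C^\infty$, so this applies directly. The work is therefore in the lower bound $\underline{\dim}_{\rm B}\ge\frac74-\frac\delta2$, which I would obtain from Theorem \ref{thm:criterion}.

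Write $g(x)=\sum_{0<|k|\le K}C_ke(kx)$. We may take $C_0=0$, since subtracting a constant from $g$ only adds a constant multiple of $\zeta(1+\delta)$ to $G_\delta$ and does not change the dimension. Because $g$ is nonconstant, some $C_k\ne0$. The decay hypothesis $\sum_k|C_k|k^{(\epsilon+\delta)/2}<\infty$ holds trivially for every $\epsilon>0$, since the sum is finite. It remains to produce a square-free $d_0$, a modulus $q_0$ and $a_0$ coprime to $q_0$ with $S_{d_0}(a_0;q_0)\ne0$.

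\textbf{Choice of the square class.} Let $k_0=\min\{k\ge1: C_k\ne0\}$ and write $k_0=m_0^2d_0$ with $d_0$ square-free. Define the finite sequence $r_k:=c_k=C_kk^{\delta/2}$ if $k\le K$ has the form $m^2d_0$, and $r_k:=0$ otherwise. This sequence is not identically zero, because $r_{k_0}\ne0$. With this choice,
\[
\sum_{k=1}^{K} r_k\,G(ka;q)=\sum_{m\ge1}c_{m^2d_0}G(m^2d_0a;q)=S_{d_0}(a;q)
\]
for all $a$ and $q$.

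\textbf{Finding a good rational.} Theorem \ref{thm:resolution}, applied to $\{r_k\}_{k=1}^K$, gives $q_0\ge1$ and $a_0$ with $\gcd(a_0,q_0)=1$ such that $S_{d_0}(a_0;q_0)\ne0$. Theorem \ref{thm:criterion} asks for $a_0$ to be a positive integer. This is harmless: $G(ka_0;q_0)$ depends only on $a_0$ modulo $q_0$, so I can replace $a_0$ by a positive representative of its class, which keeps $\gcd(a_0,q_0)=1$. In the case $q_0=1$ I simply take $a_0=1$.

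\textbf{Conclusion.} Theorem \ref{thm:criterion} now gives $\underline{\dim}_{\rm B}\ge\frac74-\frac\delta2$. Combined with the upper bound, the box dimension exists and equals $\frac74-\frac\delta2$.

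The "in particular" clause follows from the same argument. Vanishing of every level-one sum $S_d=S_d(a;1)$ is exactly the case $q_0=1$ failing, and the resolution theorem then supplies a level $q_0>1$ at which the square-class functional is nonzero, so the dimension is still $\frac74-\frac\delta2$.

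The only delicate point is making sure the restricted sequence used in Theorem \ref{thm:resolution} reproduces $S_{d_0}$ exactly, rather than the full functional $S$. The definition of $r_k$ above handles this: it keeps only the indices in the square class of $d_0$ and zeroes out the rest.
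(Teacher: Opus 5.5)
Your proof is correct and follows the same route as the paper: Theorem~\ref{thm:resolution} supplies a reduced rational $a_0/q_0$ at which a square-class functional is nonzero, Theorem~\ref{thm:criterion} gives the lower bound, and the Lipschitz-$g'$ result of \cite{Wu-Zhan2026} gives the upper bound. The only difference is the order of two steps. You fix $d_0$ in advance and apply Theorem~\ref{thm:resolution} to the coefficients restricted to that square class, whereas the paper applies it to the full sequence $C_kk^{\delta/2}$ and then extracts some $d_0$ with $S_{d_0}(a_0;q_0)\neq0$ from the decomposition $S(a_0;q_0)=\sum_{d}S_{d}(a_0;q_0)$; both versions are valid.
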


\begin{proof}
Let $g(x)=\sum_{|k|\le N}C_k e^{2\pi i k x}$ with $C_{-k}=\overline{C_k}$.
Since $g(x)$ is a nonconstant trigonometric polynomial, not all $C_k$ ($1\le k\le N$) vanish.
Set $r_k=C_k k^{\frac{\delta}{2}}$ for $1\le k\le N$.
Then not all $\{r_k\}_{k=1}^N$ are zero.
By Theorem 1.1, there exist positive integers $a_0,q_0$ with $\gcd(a_0,q_0)=1$ such that
\[S(a_0,q_0)=\sum_{k=1}^N C_k k^{\frac{\delta}{2}}G(ka_0;q_0)\neq 0.\]

\par Observe that
\[\sum_{k=1}^N C_k k^{\frac{\delta}{2}} G(ka_0;q_0)=\sum_{1\le d\le N}\;
\sum_{\substack{k=m^2 d\\m\ge 1}} C_k k^{\frac{\delta}{2}} G(ka_0;q_0),\]
where $d$ denotes square-free positive integers.
Hence there exists some $d_0 \geq 1$ such that
\[\sum_{\substack{k=m^2 d_0\\m\ge 1}} C_k k^{\frac{\delta}{2}} G(ka_0;q_0)\neq 0.\]
It then follows from Theorem \ref{thm:criterion} that
\[\dim_B(\operatorname{graph}(G_\delta))\ge \frac{7}{4}-\frac{\delta}{2}.\]

\par On the other hand, $g'(x)$ is a Lipschitz continuous function on $\mathbb{R}$ with period $1$.
Applying \cite[Theorem 1.3]{Wu-Zhan2026}, we obtain
\[\dim_B(\operatorname{graph}(G_\delta))\le \frac{7}{4}-\frac{\delta}{2}.\]
Therefore,
\[\dim_B(\operatorname{graph}(G_\delta))=\frac{7}{4}-\frac{\delta}{2}.\]
\end{proof}

\section{Two guiding examples}\label{sec:examples}

In the following, we present two examples to illustrate the relationship between rational-level vanishing of the square-class chirp functional
$S_{d}(a;q)$ and the box dimension of the corresponding graph: the first exhibits structural vanishing,
already excluded by the resolution theorem, and the second exhibits genuinely arithmetic vanishing
governed by the Prime Number Theorem for arithmetic progressions.

\begin{example}\label{ex:translation}
{\rm Let $g(x)=2^{\delta+1}\cos(2\pi x)-2\cos(8\pi x)$ with $0<\delta\le1$
such that $C_{\pm1}=2^{\delta}$, $C_{\pm4}=-1$, $C_k=0$ for $k\not\in\{\pm1,\pm4\}$ and
denote $G_{\delta}=\sum_{n=1}^{\infty}g(n^2x)n^{-1-\delta}$.

\par Direct computation gets
\[S_{d_0}=\sum_{\substack{k=m^{2}d_0\\ m\in\mathbb{N}^+}}C_k k^{\delta/2}=\begin{cases}
C_1\cdot1^{\delta/2}+C_4\cdot4^{\delta/2}=2^{\delta}-2^{\delta}=0,&d_0=1,\\[2pt]
\ 0-0=0,&d_0\ge2,
\end{cases}\]
so the vanishing condition (see \cite{Wu-Zhan2026})  holds for each square-free $d_0\in\mathbb{N}^+$.

\par Nevertheless, writing $F_{\delta}(x)=\sum_{n=1}^{\infty}\frac{\cos(2\pi n^{2}x)}{n^{1+\delta}}$
and as shown in \cite{Wu-Zhan2026},
$G_{\delta}(x)=-2^{\delta+1}F_{\delta}(x+\tfrac12)$.
This suggests that the box dimension of $graph(G_{\delta})$   is still $\frac{7}{4}-\frac{\delta}{2}$.

\par In fact, the chirp functional detects this immediately:
$$S(1;4)=c_1G(1;4)+c_4G(4;4)=2^\delta\cdot\frac{1+i}2-2^\delta\cdot1=2^{\delta-1}(i-1)\neq0,$$
Thus by Theorem \ref{thm:dichotomy}, the box dimension of $graph(G_{\delta})$   is indeed $\frac{7}{4}-\frac{\delta}{2}$.}
\end{example}

\begin{example}\label{ex:elliptic} {\rm Following Chamizo {\rm\cite{Chamizo2004}}},
{\rm let $E/\mathbb{Q}$ be a modular elliptic curve with Hasse-Weil $L$-function
$L(s,E)=\sum\limits_{n=1}^{\infty}a_{n}n^{-s}$. Define
$G_{\delta}(x)=\sum_{n=1}^{\infty}{n^{-1-\delta}}a_n\cos(2\pi nx)$ with $1/2<\delta\le1$.
Then by Chamizo {\rm\cite[Proposition 2.4]{Chamizo2004}}, we have
$\dim_{\mathrm{B}}(\operatorname{graph}{(G_{\delta})})=2-\delta<\frac{7}{4}-\frac{\delta}{2}$.

\par As shown in \cite{Wu-Zhan2026},  $G_{\delta}$ can be expressed as
$G_{\delta}(x)=\sum_{n=1}^{\infty}n^{-1-\delta}g(n^{2}x)$ with a
$1$-periodic function $g$ having Fourier coefficients $C_{k}$, which satisfy the vanishing
condition $S_{d}(a,1)=0$ for every square-free $d\in\mathbb{N}^+$. Let $a,q\ge 1$ be positive integers with $\gcd(a,q)=1$.
For any square-free integer $d$, we aim to prove that
\begin{equation} \label{Sdaq} S_d(a,q)=\sum_{m=1}^{\infty}C_{dm^2}(dm^2)^{\delta/2}G(dm^2 a;q)=0. \end{equation}
Define the partial sum
\[T_{d,N}(a,q)=\sum_{m=1}^{N}C_{dm^2}(dm^2)^{\delta/2}G(dm^2 a;q).\]
Then $S_d(a,q)=\lim_{N\to\infty}T_{d,N}$.

\par As shown in \cite{Wu-Zhan2026},
\[
C_{dm^{2}}\,m^{1+\delta}
=\frac{1}{d^{1+\delta}}\sum_{k\mid m}\mu\,\left(\tfrac{m}{k}\right)h(k),
\]
where $h(k):=a_{dk^{2}}/(2k^{1+\delta})$.
we have
\[
T_{d,N}=\frac{1}{d^{1+\frac{\delta}{2}}}\sum_{m=1}^{N}\frac{1}{m}\sum_{k\mid m}\mu\Big(\frac{m}{k}\Big)h(k)G(dm^2 a;q).
\]
The change of variales $m=kl$ yields
\begin{equation} \label{TdN}
T_{d,N}=\frac{1}{d^{1+\frac{\delta}{2}}}\sum_{k=1}^{N}\frac{h(k)}{k}
\sum_{l=1}^{\lfloor\frac{N}{k}\rfloor}\frac{\mu(l)}{l}G(adk^2l^2;q).
\end{equation}
Write
\begin{equation}\label{Wkx} W_k(x)=\sum_{l\le x}\frac{\mu(l)}{l}G(adk^2l^2;q).\end{equation}
Let $t=adk^2$ and consider the Gauss sum
\begin{equation} \label{Fl} F(l):=G(tl^2;q)=\frac{1}{q}\sum_{r=1}^{q}e\Big(\frac{tl^2r^2}{q}\Big).\end{equation}
Set $b=\gcd(l,q)$ and  write $l=bL$, $q=bQ$ with $\gcd(L,Q)=1$.
Substituting this into \eqref{Fl} yields
\[
F(l)=\frac{1}{bQ}\sum_{r=1}^{bQ}e\Big(\frac{tb^2L^2r^2}{bQ}\Big)
=\frac{1}{Q}\sum_{r=1}^{Q}e\Big(\frac{tbL^2r^2}{Q}\Big).
\]
Since $\gcd(L,Q)=1$, as $r$ runs through a complete residue system modulo $Q$,
$Lr$ also runs through a complete residue system modulo $Q$. Hence
\[
F(l)=\frac{1}{Q}\sum_{r=1}^{Q}e\Big(\frac{tbr^2}{Q}\Big):=F(b).
\]
Combing this with \eqref{Wkx} gives
\[W_k(x)=\sum_{b|q}F(b)\sum_{\substack{l\le x\\ \gcd(l,q)=b}}\frac{\mu(l)}{l}.\]
Set $l=bL$, then $\mu(bL)\neq 0$ if and only if  $\gcd(L,b)=1$. Combining this with the above equation we obtain
\begin{align}
W_k(x)
&=\sum_{b|q}F(b)\sum_{\substack{L\le x/b\\ \gcd(l,q)=b\\ \gcd(L,b)=1}}\frac{\mu(bL)}{bL} \notag\\
&=\sum_{b|q}\frac{\mu(b)}{b}F(b)\sum_{\substack{L\le x/b\\ \gcd(L,q)=1}}\frac{\mu(L)}{L}\label{W_k}
\end{align}

\par By the Prime Number Theorem for arithmetic progressions, it is a standard result that
\[\lim_{X\to\infty}\sum_{j\le X, \gcd(j,q)=1}\frac{\mu(j)}{j}=0\]
for any fixed integer $q \ge 1$.
Combing this with \eqref{W_k} and $|F(l)|\le 1$ gives $\lim_{x\to\infty}W_k(x)=0$ uniformly for $k$, and so $|W_k(x)|\le C$ uniformly for $x$ and $k$.

\par From \eqref{TdN} we have
\[
T_{d,N}=\frac{1}{d^{1+\frac{\delta}{2}}}\sum_{k=1}^{N}\frac{h(k)}{k}W_k\Big(\Big\lfloor\frac{N}{k}\Big\rfloor\Big).
\]
Since $\sum_{k=1}^{\infty}\frac{|h(k)|}{k}<\infty$(see \cite{Wu-Zhan2026}), for any $\varepsilon>0$, we can choose $K>0$ such that
\[
\sum_{k>K}\frac{|h(k)|}{k}<\varepsilon.
\]
Then
\begin{align*}
|T_{d,N}|
&\le \sum_{k=1}^{K}\frac{|h(k)|}{k}\Big|W_k\Big(\Big\lfloor\frac{N}{k}\Big\rfloor\Big)\Big|
+C\sum_{k>K}\frac{|h(k)|}{k}\\
&\le \sum_{k=1}^{K}\frac{|h(k)|}{k}\Big|W_k\Big(\Big\lfloor\frac{N}{k}\Big\rfloor\Big)\Big|
+C\varepsilon.
\end{align*}
Let $N\to\infty$ and use $W_k(\lfloor N/k\rfloor)\to 0$ for each fixed $k\le K$. It follows that
\[
\limsup_{N\to\infty}|T_{d,N}|\le C\varepsilon.
\]
By the arbitrariness of $\varepsilon$, we obtain $S_d(a,q)=0$.}
\end{example}

\end{document}